\titleformat{\section}{\bf\large}{\thesection.\,}{0.24em}{}
\titlespacing{\section}{0cm}{*2}{*1.1}
\titleformat{\subsection}[runin]{\bf}{\thesubsection.\,}{0.5em}{}
\titlespacing{\subsection}{0cm}{*2}{*1.5}
\titleformat{\subsubsection}[runin]{\rm}{(\thesubsubsection)}{0.5em}{}
\titlespacing{\subsubsection}{0cm}{*1.1}{*1.5}
\newcommand{\V}{\mathcal {V}}
\newcommand{\ES}{\mathscr{S}}
\newcommand{\dr}{\mathrm{dR}}
\newcommand{\INT}{\mathbb{Z}}
\newcommand{\RAT}{\mathbb{Q}}
\newcommand{\INTP}{\mathbb{Z}_{(p)}}
\newcommand{\Isom}{\mathbf{Isom}}
\newcommand{\Sh}{\mathrm{Sh}}
\newcommand{\Hdr}{\mathrm{H}^1_{\mathrm{dR}}}
\begin{document}
\newtheorem{theorem}[subsection]{Theorem}
\newtheorem{lemma}[subsection]{Lemma}
\newtheorem{proposition}[subsection]{Proposition}
\newtheorem{corollary}[subsection]{Corollary}
\theoremstyle{plain}
\newtheorem{introth}{Theorem}
\renewcommand{\theintroth}{\Alph{introth}}
\theoremstyle{definition}
\newtheorem{definition}[subsection]{Definition}
\theoremstyle{definition}
\newtheorem{construction}[subsection]{Construction}
\theoremstyle{definition}
\newtheorem{notations}[subsection]{Notations}
\theoremstyle{definition}
\newtheorem{asp}[subsection]{Assumption}
\theoremstyle{definition}
\newtheorem{set}[subsection]{Setting}
\theoremstyle{remark}
\newtheorem{remark}[subsection]{Remark}
\theoremstyle{remark}
\newtheorem{example}[subsection]{Example}

\makeatletter
\newenvironment{subeqn}{\refstepcounter{subsubsection}
$$}{\leqno{\rm(\thesubsubsection)}$$\global\@ignoretrue}
\makeatother

\author{Chao Zhang}

\subjclass[2010]{14G35}
\address{Institute of Mathematics, Academia Sinica, 6F, Astronomy-Mathematics Building, No. 1, Sec. 4, Roosevelt Road, Taipei 10617, Taiwan.}

\email{zhangchao1217@gmail.com}

\title[Compatibility of integral models]{Compatibility of certain integral models of Shimura varieties of abelian type}

\maketitle  \setcounter{tocdepth}{3}
\begin{abstract}
For a prime $p>2$, Kisin and Pappas constructed parahoric integral models at $p$ for Shimura varieties attached to Shimura data $(G,X)$ of abelian type such that $G$ splits over a tamely ramified extension of $\RAT_p$. A certain auxiliary data has to be chosen in their constructions. In this note, we will show that the parahoric integral models are actually independent of the choices of the auxiliary data. We also get partial results on extending morphisms of Shimura varieties to those of parahoric integral models.
\end{abstract}

\section[Introduction]{Introduction}
Let $(G,X)$ be a Shimura datum, and $\Sh_K(G,X)_{\mathbb{C}}$ be the Shimura variety attached to a compact open subgroup $K\subseteq G(\mathbb{A}_f)$. By works of Shimura, Deligne, Borovoi and Milne, $\Sh_K(G,X)_{\mathbb{C}}$ has a unique canonical model $\Sh_K(G,X)_{\mathrm{E}}$ over the reflex field $\mathrm{E}$. One also needs to study integral models of $\Sh_K(G,X)_{\mathrm{E}}$ for applications to arithmetic problems, e.g. the Langlands's program.

We fix a prime number $p>2$ as well as a place $v$ of $\mathrm{E}$ over $p$ once and for all. The ring $O_{\mathrm{E},v}$, the $v$-adic completion of $O_\mathrm{E}$, will be denoted by $O_E$; and $O_E[\frac{1}{p}]$ will be denoted by $E$. We will write $\Sh_K(G,X)$ for $\Sh_K(G,X)_{\mathrm{E}}\otimes_{\mathrm{E}}E$. If $(G,X)$ is of abelian type and $K$ is of the form $K_pK^p$ with $K^p$ small enough, there are many progresses in constructing integral models $\ES_K(G,X)$ for $\Sh_K(G,X)$ in recent years.

\subsection{} If $K_p$ is hyperspecial, then $E/\RAT_p$ is unramified, and $\ES_K(G,X)$ is constructed by Vasiu and Kisin separately. We refer to \cite{CIMK} for details of the constructions, but just mention the following properties.
\subsubsection{}\label{hypsp-sm} For $K^p$ small enough, $\ES_K(G,X)$ is smooth over $O_E$. As $K^p$ varies, the $\ES_K(G,X)$s form a inverse system  with finite \'{e}tale transition morphisms.
\subsubsection{}\label{hypsp-ext} The inverse limit $\ES_{K_p}(G,X):=\varprojlim_{K^p}\ES_K(G,X)$ is such that for any formally smooth $O_E$-algebra $R$ which is also regular, the natural map $$\ES_{K_p}(G,X)(R)\rightarrow \ES_{K_p}(G,X)(R[1/p])$$ is bijective.

These two properties characterize $\ES_K(G,X)$ uniquely. Moreover, they also imply that the formulism is functorial in the following sense.
\subsubsection{}\label{hypsp-funct} Let $(G',X')$ be a Shimura datum of abelian type with $K'_p\subseteq G'(\RAT_p)$ hyperspecial, and  $f:(G,X)\rightarrow (G',X')$ be a morphism of Shimura data such that $f(K)\subseteq K'=K'_pK'^p$. Then the morphism $f:\Sh_{K}(G,X)\rightarrow \Sh_{K'}(G',X')_{E}$ extends to a morphism $\ES_{K}(G,X)\rightarrow \ES_{K'}(G',X')_{O_E}$.

\subsection{} If $K_p$ is, more generally, parahoric, $\Sh_{K}(G,X)$ is expected to have an integral model $\ES_K(G,X)$ whose singularity is controlled by a certain local model $\mathrm{M}_{G,X}^{\mathrm{loc}}$. The constructions of $\ES_K(G,X)$ and $\mathrm{M}_{G,X}^{\mathrm{loc}}$ are still vastly open. In this paper, we will discuss some known cases established in \cite{Paroh}. To explain the problems considered in this paper as well as the results, we need to recall the construction of $\ES_K(G,X)$ very briefly.

\subsubsection{}\label{intro--recall Hodge} If $(G,X)$ is of Hodge type, such that $G_{\RAT_p}$ splits over a tamely ramified extension of $\RAT_p$ and that $p\nmid |\pi_1(G_{\mathbb{Q}_p}^{\mathrm{der}})|$, one can \emph{choose} a good symplectic embedding in the sense of \S\ref{para int setting}, and take $\ES_K(G,X)$ to be the normalization of the closure of $\Sh_K(G,X)$ in the corresponding moduli space of abelian schemes. One of the main results in \cite{Paroh} (recalled in Theorem \ref{result P-Z and K-P} here) claims that it dose have the expected singularity.

\subsubsection{}\label{intro--recall Ab} If $(G,X)$ is of abelian type, such that $G_{\RAT_p}$ splits over a tamely ramified extension of $\RAT_p$, one can \emph{choose} a Shimura datum $(G_1,X_1)$ of Hodge type as in Lemma \ref{lemma--choose of Hodge cover}, and construct $\ES_K(G,X)$ using the integral model attached to $(G_1,X_1)$. We refer to \S\ref{setting--ab type} for more details, but only mention that $(G_1,X_1)$ satisfies the hypothesis in (\ref{intro--recall Hodge}), and the integral model is constructed as in loc. cit.

\subsection{} Now we can describe our problems. Our main concern is whether these integral models are independent of the choices made when constructing them. More precisely, it consists of the following three questions.

\subsubsection{}\label{prob--Hodge type} If $(G,X)$ is of Hodge type as in (\ref{intro--recall Hodge}), whether $\ES_K(G,X)$ is independent of choices of symplectic emdeddings there.
\subsubsection{}\label{prob--Ab type} If $(G,X)$ is of abelian type as in (\ref{intro--recall Ab}), whether $\ES_K(G,X)$ is independent of choices of the $(G_1,X_1)$s there.
\subsubsection{}\label{prob--Hodge to Hodge} If $(G,X)$ is of Hodge type as in (\ref{intro--recall Hodge}), one can follow either (\ref{intro--recall Hodge}) or (\ref{intro--recall Ab}) to construct an integral model for $\Sh_K(G,X)$. The question is then whether these two constructions coincide.

The main result of this paper is as follows. We refer to Theorem \ref{uniq int model}, Theorem \ref{Thm--indep ab type} and Theorem \ref{Thm--indep ab type 2} for the proof.
\begin{introth}The integral model $\ES_K(G,X)$ is independent of the choices made above when constructing it. More precisely, the problems (\ref{prob--Hodge type}), (\ref{prob--Ab type}) and (\ref{prob--Hodge to Hodge}) all have positive answers.
\end{introth}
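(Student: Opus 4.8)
The plan is to base all three statements on two features guaranteed by the constructions in (\ref{intro--recall Hodge})--(\ref{intro--recall Ab}) and by Theorem \ref{result P-Z and K-P}: each $\ES_K(G,X)$ is \emph{normal} and \emph{flat} over $O_E$ with generic fibre the fixed $\Sh_K(G,X)$, and it is obtained as a normalization, so its formation is functorial for finite morphisms of the ambient moduli. The soft input I will use repeatedly is this: if $Z\to Z'$ is a finite morphism of reduced, $O_E$-flat schemes inducing an isomorphism on generic fibres, then it induces an isomorphism on normalizations (the two integral closures agree in the common total ring of fractions). Consequently a morphism between two of our models restricting to the identity on $\Sh_K(G,X)$ is unique, since the generic fibre is schematically dense and the models are separated; every isomorphism assertion is thereby reduced to exhibiting a finite, generically bijective comparison at the level of the closures.

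For Problem (\ref{prob--Hodge type}), let $\iota_1,\iota_2$ be two good symplectic embeddings, say $\iota_i\colon (G,X)\hookrightarrow(\mathrm{GSp}(V_i),S^\pm_i)$, and form the diagonal embedding into $\mathrm{GSp}(V_1\oplus V_2)$. First I would check that this is again good in the sense of \S\ref{para int setting}: the similitude characters of $\iota_1$ and $\iota_2$ agree, so the diagonal lands in the symplectic group of the orthogonal sum, and the sum of the chosen lattices realizes the same parahoric level. This verification is where the hypotheses on the parahoric are genuinely used. Since over $\Sh_K(G,X)$ the universal abelian scheme for the diagonal embedding decomposes as the product of those for $\iota_1$ and $\iota_2$, the closure $Z^{1\oplus2}$ for the diagonal embedding is identified, through the product map of abelian schemes, with the closure of the graph of $(\iota_1,\iota_2)$ inside $\mathscr{A}_1\times_{O_E}\mathscr{A}_2$, where $\mathscr{A}_i$ denotes the Siegel integral model attached to $\iota_i$. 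The two projections then furnish morphisms $Z^{1\oplus2}\to Z^{i}$ onto the closures $Z^{i}$ that are the identity on $\Sh_K(G,X)$. The crux is the \emph{finiteness} of these projections; granting it, the criterion of the first paragraph identifies the normalizations and yields $\ES_K^{1}\cong\ES_K^{1\oplus2}\cong\ES_K^{2}$.

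For Problem (\ref{prob--Ab type}) I would exploit that the recipe of (\ref{intro--recall Ab}) factors through the adjoint datum $(G^{\mathrm{ad}},X^{\mathrm{ad}})$: one extracts a connected component of the Hodge-type model $\ES(G_1,X_1)$, equips it with the action of the relevant arithmetic group on $\pi_0$, and reassembles $\ES_K(G,X)$ by descent along data attached to $G^{\mathrm{ad}}$. Given two Hodge covers $(G_1,X_1)$ and $(G_1',X_1')$, I would dominate both by a common Hodge cover and apply Problem (\ref{prob--Hodge type}) to identify the associated connected-component models and to match the group actions entering the gluing; the two reassembled models then coincide after descent. The subtle point is the compatibility of the $\pi_0$-data, that is, checking that the identifications of connected-component models are equivariant for the arithmetic group actions used in the construction.

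Finally, Problem (\ref{prob--Hodge to Hodge}) is formal once the first two are in hand. If $(G,X)$ is itself of Hodge type, it is an admissible choice of its own Hodge cover in Lemma \ref{lemma--choose of Hodge cover}; for that tautological choice the abelian-type recipe of (\ref{intro--recall Ab}) merely disassembles the Hodge-type model of (\ref{intro--recall Hodge}) into its connected component and $\pi_0$-action and reassembles it, returning the same model. Since Problems (\ref{prob--Hodge type}) and (\ref{prob--Ab type}) show that neither construction depends on its auxiliary data, the two models agree for every choice. I expect Problem (\ref{prob--Hodge type}) to be the genuine obstacle: producing the comparison morphism integrally and proving its finiteness requires controlling how normalization interacts with the two distinct moduli interpretations, whereas the remaining problems reduce to bookkeeping with connected components and group actions once that comparison is available.
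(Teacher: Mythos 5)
Your outline of Problem (\ref{prob--Hodge type}) shares its starting point with the paper (the sum embedding $V_1\oplus V_2$, the splitting of the universal abelian scheme via Faltings--Chai extension of idempotents, which is exactly Lemma \ref{i' isom}), but the step you explicitly defer --- finiteness of the projections $Z^{1\oplus 2}\to Z^{i}$ --- is where the entire difficulty lives, and no route to it is proposed. The closures sit inside $\mathscr{A}_{g_1,d_1,n}\times\mathscr{A}_{g_2,d_2,n}$, and since these moduli schemes are not proper the projection from the closure of the graph is not even obviously proper, let alone finite; establishing that is essentially equivalent to the theorem itself. The paper never proves finiteness. Instead it shows two things: (i) the projection induces isomorphisms on completed local rings at closed points of the special fibre, by identifying the $G_{\INT_p}$-torsor $\widetilde{\ES}_{3,K}$ with $p_1^*\widetilde{\ES}_{1,K}$ (Lemma \ref{I is torsor}) and using that both torsors map \emph{smoothly} to the same local model $\mathrm{M}^{\mathrm{loc}}_{G,X}$, so that both completed local rings are identified with the same completed local ring of $\mathrm{M}^{\mathrm{loc}}_{G,X}$ (Theorem \ref{result P-Z and K-P}(2),(3)); and (ii) the projection is surjective on special fibres, by lifting an $\overline{\mathbb{F}}_p$-point along a strictly henselian DVR and extending the abelian scheme by the N\'eron--Ogg--Shafarevich criterion. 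A surjective birational morphism of normal schemes inducing isomorphisms on completed local rings is an isomorphism (Theorem \ref{uniq int model}). Your ``soft input'' about normalizations is correct as stated, but without (i) and (ii) --- or the finiteness you grant yourself --- the argument does not close, and the local-model/torsor mechanism is the missing idea.

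The reduction you propose for Problem (\ref{prob--Hodge to Hodge}) also fails: a Hodge-type $(G',X')$ is in general \emph{not} an admissible choice of its own Hodge cover, since Lemma \ref{lemma--choose of Hodge cover} forces the auxiliary datum to have central torus and derived group of the specific form $\prod_i\mathrm{Res}_{F_i/\RAT}H'^{\sharp}_i$, which is related to $G'^{\mathrm{der}}$ only by a (typically non-trivial) central isogeny $H\to H'$. So the abelian-type recipe does not ``disassemble and reassemble'' the Hodge-type model; one must compare the connected model $\ES^+_{\Gamma_p}$ built from $H$ with $\ES^+_{\Gamma'_p}$ built from $H'$ across this isogeny. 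This is the content of Theorem \ref{Thm--indep ab type}: the paper proves $\ES^+_{\Gamma'_p}=\ES^+_{\Gamma_p}/\Delta$ with $\Delta=\ker(\mathscr{A}(G_{\INTP})^\circ\to\mathscr{A}(G'_{\INTP}))$, using \'etaleness of the comparison map (again via the local model diagram), surjectivity on special fibres from \cite[Theorem 4.6.23]{Paroh}, and a real-approximation argument to reduce to $Y=Y'$. Your treatment of Problem (\ref{prob--Ab type}) is closer in spirit to the paper --- a common refinement $(G_3,X_3)$ mapping to both covers, plus the fact that $\mathscr{A}(G_{\INTP})^\circ$ depends only on $G^{\mathrm{der}\circ}_{\INTP}$ --- but it too ultimately rests on the connected-model comparison of Theorem \ref{result for conn shv}, whose proof is the same local-model argument you are missing in Problem (\ref{prob--Hodge type}).
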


Let $(G',X')$ be another Shimura data of abelian type, such that $G'_{\RAT_p}$ splits over a tamely ramified extension of $\RAT_p$. Let $K'\subseteq G'(\mathbb{A}_f)$ be of the form $K'_pK'^p$ with $K'_p\subseteq G'(\RAT_p)$ parahoric, and $f:(G,X)\rightarrow (G',X')$ be a morphism of Shimura data with $f(K)\subseteq K'$. One then asks whether (or when) the morphism $\Sh_{K}(G,X)\rightarrow \Sh_{K'}(G',X')_{E}$ extends to a morphism $\ES_{K}(G,X)\rightarrow \ES_{K'}(G',X')_{O_E}$.
It could be expected that it extends if $G\rightarrow G'$ extends to a homomorphism of the corresponding parahoric group schemes. Unfortunately, we can only prove some special and weaker results which could be viewed as supporting evidences for this expectation. We collect some of our results here and refer to Proposition \ref{extn morp on int}, Proposition \ref{prop--funct ab type} and \S\ref{remark--when funct works} for more details.

\begin{introth}Notations as above, the morphism $f:\Sh_{K}(G,X)\rightarrow \Sh_{K'}(G',X')_{E}$ extends to a morphism $\ES_{K}(G,X)\rightarrow \ES_{K'}(G',X')_{O_E}$ in the following two cases.
\begin{enumerate}
\item $(G,X)$ and $(G',X')$ are of Hodge type, and $f:G\rightarrow G'$ extends to a homomorphism of the corresponding Bruhat-Tits group schemes.
\item $f:G\rightarrow G'$ is surjective with $(G^\mathrm{ad},X^\mathrm{ad})$ having no factors of type $\mathrm{D}^\mathbb{H}$, and $K_p\rightarrow K'_p$ is induced by $f$ in the sense of (\ref{case--induced by surj}).
\end{enumerate}
\end{introth}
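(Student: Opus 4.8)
The plan is to treat the two cases by reducing to the Siegel moduli picture, using the freedom granted by the independence theorem (Theorem \ref{uniq int model}) to arrange all auxiliary data compatibly with $f$. Recall that in the Hodge-type case $\ES_K(G,X)$ and $\ES_{K'}(G',X')$ are, by construction, normalizations of the closures of the respective Shimura varieties inside Siegel integral models attached to chosen good symplectic embeddings.

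\textbf{Case (1).} First I would fix a good symplectic embedding $\iota'\colon (G',X')\hookrightarrow(\mathrm{GSp}(V'),\mathcal H')$ for $(G',X')$ and view $V'$ as a symplectic $G$-representation via $f$, obtaining a morphism of Shimura data $(G,X)\to(\mathrm{GSp}(V'),\mathcal H')$. Because $f$ extends to a homomorphism $\mathcal G\to\mathcal G'$ of the Bruhat--Tits group schemes, the $K'_p$-stable lattice in $V'$ is $K_p$-stable, so the parahoric level at $p$ is respected. I would then enlarge $V'$ to $V=V'\oplus V''$ by an auxiliary faithful good embedding for $(G,X)$, with integral lattices chosen compatibly, so that $\iota\colon(G,X)\hookrightarrow(\mathrm{GSp}(V),\mathcal H)$ is good and the $G$-stable decomposition $V=V'\oplus V''$ is defined integrally; by Theorem \ref{uniq int model} this does not change $\ES_K(G,X)$. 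The integral decomposition of $K_p$-stable lattices yields, over $\ES_K(G,X)$, a $V'$-factor $A'$ (up to isogeny) of the pullback of the universal abelian scheme from the Siegel model $\mathscr A$ for $\mathrm{GSp}(V)$; equipped with its induced structure, $A'$ defines a morphism $\ES_K(G,X)\to\mathscr A'$ to the Siegel integral model for $(\mathrm{GSp}(V'),\mathcal H')$, compatible on the generic fibre with $f$ followed by $\iota'$. Since $\ES_{K'}(G',X')$ is the normalization of the closure of $\Sh_{K'}(G',X')$ in $\mathscr A'$ and $\ES_K(G,X)$ is normal, the generic-fibre factorization through $\Sh_{K'}(G',X')$ together with the universal property of normalization yields the desired extension $\ES_K(G,X)\to\ES_{K'}(G',X')_{O_E}$.

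\textbf{Case (2).} Here I would use the construction of \S\ref{setting--ab type}, where the abelian-type model is obtained from a Hodge-type cover (Lemma \ref{lemma--choose of Hodge cover}) by a quotient construction built from the adjoint group and the set of connected components. Since $f$ is surjective it induces a morphism $G^{\mathrm{ad}}\to (G')^{\mathrm{ad}}$ and compatible maps of the extension data governing the quotients; choosing the Hodge-type covers $(G_1,X_1)$ and $(G'_1,X'_1)$ compatibly reduces the problem to Case (1) on the covers together with a descent along the quotient maps. The hypothesis that $(G^{\mathrm{ad}},X^{\mathrm{ad}})$ has no factor of type $\mathrm{D}^{\mathbb{H}}$ is what makes the connected-component bookkeeping and the identification of the relevant $\pi_1$-data behave well, so that the morphism descends, while the condition that $K_p\to K'_p$ is induced by $f$ in the sense of (\ref{case--induced by surj}) guarantees that the parahoric levels match under the descent.

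\textbf{Main obstacle.} I expect the crux to be the construction in Case (1) of compatible integral symplectic data together with the resulting morphism to $\mathscr A'$: one must match the parahoric lattice stabilizers under $f$ --- precisely where the extension to Bruhat--Tits group schemes enters --- and realize the $V'$-factor integrally over $\ES_K(G,X)$, despite its being only an up-to-isogeny operation on the generic fibre. Case (2) should then be largely formal modulo Case (1), with the type-$\mathrm{D}^{\mathbb{H}}$ exclusion and the induced-level hypothesis isolating exactly the situations in which the quotient construction is compatible with $f$.
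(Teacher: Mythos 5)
Your Case (1) is essentially the paper's own argument (Proposition \ref{extn morp on int}): form $V_1=V\oplus V'$ with $V'$ viewed as a $G$-representation through $f$, invoke Theorem \ref{uniq int model} to identify $\ES_K$ with the normalization taken inside $\mathscr{A}_{g_1,d_1,n}$, and project to the Siegel model for $V'$. The point you flag as the ``main obstacle'' --- realizing the $V'$-factor integrally --- is exactly what Lemma \ref{i' isom} supplies: with lattices chosen as $V_{1,\INT}=V_\INT\oplus V'_\INT$ the decomposition is an honest product (not merely up to isogeny) already on the generic fibre, and the two idempotent endomorphisms extend over the normal integral model by Faltings--Chai, so the product structure extends integrally. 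Case (1) is therefore fine, and matches the paper.

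Case (2) has a genuine gap. You propose to ``choose the Hodge-type covers $(G_1,X_1)$ and $(G_1',X_1')$ compatibly'' and thereby ``reduce the problem to Case (1) on the covers.'' But a surjection $f\colon G_4\to G_4'$ of abelian-type data need not lift to any morphism between Hodge-type covers chosen as in Lemma \ref{lemma--choose of Hodge cover}; what $f$ induces is only a map $H\to H'$ of the derived (here simply connected) groups, together with $G_4\to G_4'$ and $G^{\mathrm{ad}}\to G'^{\mathrm{ad}}$. This is precisely why the paper develops the connected-Shimura-variety variation: one builds an auxiliary datum $(G_1,X_1)$ with $G_1\subseteq G\times G'$ generated by the graph of $H\to H'$ and central tori (\S\ref{morp of conn to symp ebd}), applies the Hodge-type extension result to $(G_1,X_1)\to(G',X')$, and uses Theorem \ref{result for conn shv} to obtain the morphism of connected integral models $\ES^+_{\Gamma_p}\to\ES^+_{\Gamma'_p}$. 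Only then does one descend through the quotient presentation $\big[[\ES^+\times\mathscr{A}(G_{4,\INTP})]/\mathscr{A}(G_{\INTP})^\circ\big]^{|\mathrm{J}|}$ of Proposition \ref{prop--funct ab type}, which requires the compatible integral homomorphisms $H_{\INT_p}\to H'_{\INT_p}$, $G^\circ_{4,\INT_p}\to G'^\circ_{4,\INT_p}$ and $G^{\mathrm{ad}\circ}_{\INT_p}\to G'^{\mathrm{ad}\circ}_{\INT_p}$ of condition (\ref{condi--morp ab type 1}). The no-$\mathrm{D}^{\mathbb{H}}$ hypothesis enters not as ``$\pi_1$ bookkeeping'' but to guarantee that $H$ is the simply connected cover of $G_4^{\mathrm{der}}$, so that building functoriality produces $H_{\INT_p}\to H'_{\INT_p}$ and $H^\circ_{\INT_p}=H_{\INT_p}$; the hypothesis that $K_p\to K'_p$ is induced by $f$ pins down $\mathfrak{b}'$ as the image of $\mathfrak{b}$ so these maps exist. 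Your sketch names none of these steps, and the one reduction it does name (to Case (1) on compatibly chosen covers) is not available.
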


We learned during the 2019 Oberwolfach workshop ``Arithmetic of Shimura Varieties'' that G. Pappas also proves (\ref{prob--Hodge type}) positively. It seems that these two proofs follow the same strategy, but they are still different from each other in many ways.



\section[Integral models of Shi]{Integral models of Shimura varieties}\label{sec--int mod}

Let $G$ be a reductive group over $\mathbb{Q}$, we will write $\mathcal {B}(G,\mathbb{Q}_p)$ for the (extended) Bruhat-Tits Building of $G_{\mathbb{Q}_p}$. For $\mathfrak{b}\in \mathcal {B}(G,\mathbb{Q}_p)$, we write $G_{\INT_p}$ for the attached Bruhat-Tits group scheme, and $G_{\INT_p}^\circ$ for its connected component of the identity (i.e. the parahoric group scheme). They are both linear algebraic groups over $\mathbb{Z}_p$ with generic fibers isomorphic to $G_{\mathbb{Q}_p}$. The group scheme $G_{\INT_p}$ (resp. $G_{\INT_p}^\circ$) descends to a (unique) smooth $\INTP$-model of $G$, denoted by $G_{\INTP}$ (resp. $G_{\INTP}^\circ$). This $\INTP$-model will also be called the Bruhat-Tits group scheme (resp. parahoric group scheme) attached to $\mathfrak{b}$. Let $f:G\rightarrow G'$ be a homomorphism of reductive groups with $G'_{\INT_p}$, $G'_{\INTP}$ obtained similarly, then any homomorphism (if exists) $G_{\INT_p}\rightarrow G'_{\INT_p}$ extending $f_{\RAT_p}$ descends to $\INTP$. The same statement holds if we pass to the identity components.

Let $(G,X)$ be a Shimura datum with reflex field $\mathrm{E}$. We will fix a place $v$ over $p$ once and for all. The ring $O_{\mathrm{E},v}$, the $v$-adic completion of $O_\mathrm{E}$, will be denoted by $O_E$; and $O_E[\frac{1}{p}]$ will be denoted by $E$. Fixing $K_p:=G_{\INT_p}(\mathbb{Z}_p)$ and $K_p^\circ:=G_{\INT_p}^\circ(\mathbb{Z}_p)$, for $K^p\subseteq G(\mathbb{A}_f^p)$ small enough, we set $K:=K_pK^p$  and $K^\circ:=K_p^\circ K^p$. We are interested in a certain integral model of $\Sh_{K^\circ}(G,X)_E$ when it is of abelian type, and also of $\Sh_K(G,X)_E$ when it is of Hodge type.

\subsection{}\label{para int setting} Let $(G,X)$ be a Shimura datum of Hodge type. We assume, in addition, that
\begin{subeqn}\label{conditon--Hodge type}
G_{\mathbb{Q}_p} \text{ splits over a tamely ramified extension and } p\nmid |\pi_1(G_{\mathbb{Q}_p}^{\mathrm{der}})|.
\end{subeqn}

By \cite[\S4.1.6]{Paroh}, there is a symplectic embedding $i:(G,X)\rightarrow \mathrm{GSp}(V,\psi)$ with a  $\mathbb{Z}$-lattice $V_{\INT}\subseteq V$, such that $V_{\INT}\subseteq V_{\INT}^\vee$; and that the Zariski closure of $G$ in $\mathrm{GL}(V_{\mathbb{Z}_{(p)}})$, is $G_{\INTP}$. Here for a commutative ring $R$, we write $V_R$ for $V_{\INT}\otimes R$. Let $d=| V_{\INT}^\vee/V_{\INT}|$, $g=\frac{1}{2}\mathrm{dim}V$, $L=L_pL^p$ where $L_p$ is the subgroup of $\mathrm{GSp}(V,\psi)(\mathbb{Q}_p)$ leaving $V_{\mathbb{Z}_p}$ stable, and $L^p$ is a compact open subgroup of $\mathrm{GSp}(V,\psi)(\mathbb{A}_f^p)$ containing $K^p$, leaving $V_{\widehat{\mathbb{Z}}^p}$ stable and small enough. The symplectic embedding induces a morphism $i:\Sh_K(G,X)_E\rightarrow \mathscr{A}_{g,d,L/O_E}$, whose scheme theoretic image is denoted by $\ES^-_K(G,X)$. Let $\ES_K(G,X)$ be the normalization of $\ES^-_K(G,X)$, and $\ES_{K^\circ}(G,X)$ be the normalization of $\ES_{K}(G,X)$ with respect to the covering $\Sh_{K^\circ}(G,X)\rightarrow \Sh_{K}(G,X)$. They are integral models of $\Sh_K(G,X)$ and $\Sh_{K^\circ}(G,X)$ respectively, with a finite surjective morphism $\ES_{K^\circ}(G,X)\rightarrow \ES_{K}(G,X)$.

\subsection{} We recall the constructions of local models. Instead of using the original constructions in \cite{local model P-Z}, we will follow \cite[\S4.1.5]{Paroh}. Notations and assumptions as before, the Shimura datum $(G,X)$ gives a $G(\overline{\mathbb{Q}}_p)$-conjugacy class $\{\mu\}$ of cocharacters. Its induced $\mathrm{GL}(V_{\mathbb{Z}_p})(\overline{\mathbb{Q}}_p)$-conjugacy class contains a cocharacter $\mu'$ defined over $\mathbb{Z}_p$. Let $P$ (resp. $P'$) be the parabolic subgroup of $G_{\overline{\mathbb{Q}}_p}$ (resp. $\mathrm{GL}(V_{\mathbb{Z}_p})$) with non-negative weights with respect to $\mu'$ (resp. $\mu$), we denote by $\mathrm{M}_{G,X}^{\mathrm{loc}}$ the closure of the $G$-orbit of $y$ in $\mathrm{GL}(V_{\mathbb{Z}_p})/P'$, where $y$ is the point corresponding to $P$. Then $\mathrm{M}_{G,X}^{\mathrm{loc}}$ is defined over $O_E$ and equipped with an action of $G_{\INT_p}$. It is called the \emph{local model}.

To explain properties of $\ES_K(G,X)$, $\ES_{K^\circ}(G,X)$ and $\mathrm{M}_{G,X}^{\mathrm{loc}}$ as well as relations between them, we need the following data. By \cite[Lemma 1.3.2]{CIMK}, there is a tensor $s\in V_{\mathbb{Z}_{(p)}}^\otimes$ defining $G_{\INTP}$. Let $\V$ be $\Hdr(\mathcal {A}/\ES_K(G,X))$, $\V_E$ be $\Hdr(\mathcal {A}/\Sh_K(G,X)_E)$, $\V^1\subseteq \V$ be the Hodge filtration, and $s_{\mathrm{dR},E}\in \V_E^\otimes$ be the section induced by $s$.
\begin{theorem}\label{result P-Z and K-P}
Notations an assumptions as above, we have the followings.

\begin{enumerate}
\item $\mathrm{M}_{G,X}^{\mathrm{loc}}$ is normal with reduced geometric special fiber. Moreover, $\mathrm{M}_{G,X}^{\mathrm{loc}}$ depends only on the pair $(G_{\mathbb{Q}_p}, \{\mu\})$.
\item $s_{\mathrm{dR},E}$ extends to a tensor $s_{\mathrm{dR}}\in \V^\otimes$. The $\ES_K(G,X)$-scheme $\widetilde{\ES}_K(G,X)$ classifying isomorphisms $f:V_{\mathbb{Z}_p}^\vee\rightarrow \V$ mapping $s$ to $s_{\mathrm{dR}}$ is a $G_{\INT_p}$-torsor.
\item The morphism $q:\widetilde{\ES}_K(G,X)\rightarrow \mathrm{GL}(L_{\mathbb{Z}_p})/P'$, $f\mapsto f^{-1}(\V^1)$ factors through $\mathrm{M}_{G,X}^{\mathrm{loc}}$. Moreover, it is smooth.

\item The morphism $\ES_{K^\circ}(G,X)\rightarrow \ES_K(G,X)$ is a finite \'{e}tale covering.
\end{enumerate}
\end{theorem}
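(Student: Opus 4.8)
The plan is to assemble the four assertions from the two bodies of theory underlying the construction: the local model theory of \cite{local model P-Z} for (1), and the crystalline deformation theory of the Hodge-type integral model developed in \cite{Paroh} for (2)--(4). These parts are logically almost independent, so I would treat them one at a time and then isolate the single hard input.

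For (1) I would recall the Pappas--Zhu realization of $\mathrm{M}_{G,X}^{\mathrm{loc}}$ inside a twisted affine Grassmannian attached to $G_{\INT_p}$, obtained by spreading out over a ramified cover of the disc so that the geometric special fibre becomes a union of Schubert varieties in a twisted affine flag variety. Normality and reducedness of the geometric special fibre would then follow from the coherence results (equivalently, Frobenius-splitting and Mehta--Ramanathan type arguments) identifying these orbit closures with normal Schubert varieties having reduced special fibre. The claim that $\mathrm{M}_{G,X}^{\mathrm{loc}}$ depends only on $(G_{\RAT_p},\{\mu\})$ for the fixed parahoric level is then immediate, since every ingredient is extracted from $G_{\RAT_p}$, the parahoric $G_{\INT_p}$, and the geometric conjugacy class $\{\mu\}$, whereas the symplectic embedding and the auxiliary cocharacter $\mu'$ enter only through choices that drop out of the orbit closure.

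For (2) and (3), which form the heart of the matter, I would follow Kisin's strategy as adapted in \cite{Paroh}. The tensor $s_{\mathrm{dR},E}$ is the de Rham realization of an absolute Hodge, in fact crystalline, cycle; over each $p$-adic point it is Frobenius-invariant, and one extends it to $s_{\mathrm{dR}}\in\V^\otimes$ by descending the corresponding section of the crystal through the theory of Breuil--Kisin modules and using the normality of $\ES_K(G,X)$ to glue the local extensions. To see that $\widetilde{\ES}_K(G,X)$ is a $G_{\INT_p}$-torsor I would argue pointwise: at each geometric point the stalk of $\V$ with its tensor $s_{\mathrm{dR}}$ is isomorphic, as a decorated module, to $(V_{\INT_p}^\vee,s)$, so the fppf sheaf of such isomorphisms is a $G_{\INT_p}$-torsor, the key input being the compatibility of the integral tensor with the lattice cutting out $G_{\INT_p}$. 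The factorization of $q$ through $\mathrm{M}_{G,X}^{\mathrm{loc}}$ records that the Hodge filtration lies everywhere in the expected $G$-orbit, and its smoothness is the local model diagram: I would establish it by matching, at each closed point, the versal deformation of the abelian scheme together with its crystalline tensors against the completed local ring of $\mathrm{M}_{G,X}^{\mathrm{loc}}$, the comparison being governed by Grothendieck--Messing and display deformation theory.

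Finally, for (4) I would note that $K_p^\circ=G_{\INT_p}^\circ(\INT_p)$ is normal in $K_p=G_{\INT_p}(\INT_p)$ with finite quotient, and that $\ES_{K^\circ}(G,X)\to\ES_K(G,X)$ is by construction the normalization along the finite \'etale covering $\Sh_{K^\circ}(G,X)\to\Sh_K(G,X)$; since the target is normal and the covering is a torsor under the finite group scheme $G_{\INT_p}/G_{\INT_p}^\circ$, which is \'etale because $p\nmid|\pi_1(G_{\RAT_p}^{\mathrm{der}})|$, the morphism is finite \'etale. The main obstacle is the combined content of (2) and (3): constructing the integral de Rham tensors over the ramified base and proving smoothness of the local model diagram require the full integral $p$-adic Hodge theory of \cite{Paroh}, and the ramification of $E/\RAT_p$ is precisely what makes the deformation-theoretic comparison with $\mathrm{M}_{G,X}^{\mathrm{loc}}$ delicate.
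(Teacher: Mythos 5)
The paper does not actually prove this theorem: its ``proof'' consists entirely of citations, attributing (1) to the construction in the Pappas--Zhu paper and (2)--(4) to \cite[Prop.\ 4.2.6, Thm.\ 4.2.7, Prop.\ 4.3.7]{Paroh}. Your proposal is therefore doing something different in kind, namely sketching the arguments of the cited sources. For (1)--(3) your sketch is a fair pr\'ecis of those arguments: the twisted affine Grassmannian realization with normality and reducedness of Schubert varieties via coherence/Frobenius-splitting for (1), and the extension of the crystalline tensors by Breuil--Kisin theory plus the deformation-theoretic local model diagram for (2)--(3). One caution on (2): fibrewise triviality of the Isom-scheme does not by itself make it an fppf torsor (you need flatness/smoothness of the Isom-scheme, which in \cite{Paroh} is extracted from the explicit trivialization over completed local rings of closed points of the special fibre, not from a pointwise argument); but you do invoke the right local machinery for the tensor extension, so this is a presentational slip rather than a wrong route.

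Part (4) as written has a genuine gap. You argue that since $\Sh_{K^\circ}\to\Sh_K$ is generically a torsor under the finite \'etale group $K_p/K_p^\circ$ and $\ES_K$ is normal, the normalization $\ES_{K^\circ}\to\ES_K$ is finite \'etale. This inference is false in general: the normalization of a normal scheme in a finite \'etale cover of its generic fibre can be ramified (already $\Spec\INT_p$ inside $\RAT_p(\sqrt{p})$ gives $\Spec\INT_p[\sqrt{p}]$). The actual proof of \cite[Prop.\ 4.3.7]{Paroh} is not formal; it uses the $G_{\INT_p}$-torsor $\widetilde{\ES}_K$ of part (2), pushed out along $G_{\INT_p}\to G_{\INT_p}/G_{\INT_p}^\circ$ to a torsor under the finite \'etale component group scheme, together with the lifting/extension property of points of the special fibre, to exhibit an integral finite \'etale cover of $\ES_K$ and then to identify it with $\ES_{K^\circ}$ by comparing generic fibres and invoking normality. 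Without that input, \'etaleness along the special fibre is exactly what remains to be proved. (Also, minor point: the component group scheme $G_{\INT_p}/G_{\INT_p}^\circ$ is \'etale for any smooth group scheme; the hypothesis $p\nmid|\pi_1(G_{\RAT_p}^{\mathrm{der}})|$ is not what is responsible for that.)
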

\begin{proof}
Statement (1) follows from the construction in \cite{local model P-Z}, statement (2) is \cite[Proposition 4.2.6]{Paroh}, statement (3) is \cite[Theorem 4.2.7]{Paroh}, and statement (4) is \cite[Proposition 4.3.7]{Paroh}.
\end{proof}

\subsection{}\label{setting--ab type} Let $(G_4,X_4)$\footnote{The notation $(G_4,X_4)$ might seem strange, but the readers would see later that this would make our notations coherent.} be a Shimura datum of abelian type. There is a $(G,X)$ of Hodge type together with a central isogeny $G^{\mathrm{der}}\rightarrow G_4^{\mathrm{der}}$ inducing an isomorphism of Shimura data $(G^{\mathrm{ad}},X^{\mathrm{ad}})\rightarrow (G_4^{\mathrm{ad}},X_4^{\mathrm{ad}})$. By \cite[\S2.3]{varideshi}, we can choose $(G,X)$ such that $G^{\mathrm{der}}$ is as follows: $G^{\mathrm{ad}}=\prod_iH_i$ where each $H_i$ is $\mathbb{Q}$-simple and adjoint. Then $H_i=\mathrm{Res}_{F_i/\mathbb{Q}}H'_i$ where $F_i$ is totally real, $H'_i$ is absolutely simple over $F_i$ and adjoint. Let $\widetilde{H}'_i$ be the simply connected cover of $H'_i$, we have $G^{\mathrm{der}}=\prod_i\mathrm{Res}_{F_i/\mathbb{Q}}H'^\sharp_i$, with $H'^\sharp_i:=\widetilde{H}'_i/C_i$ if $(H_i,X_i)$ is of type $\mathrm{D}^{\mathbb{H}}$, here $C_i$ is the kernel of the first fundamental weight restricted to the center of $\widetilde{H}'_i$; and $H'^\sharp_i:=\widetilde{H}'_i$ otherwise. In particular, $\pi_1(G_{\mathbb{Q}_p}^{\mathrm{der}})$ is a 2-group, and trivial if $(G^{\mathrm{ad}},X^{\mathrm{ad}})$ has no factors of type $\mathrm{D}^{\mathbb{H}}$.

We assume from now on that
\begin{subeqn}\label{assump--ab type}
G_{4,\RAT_p}\text{ splits over a tamely ramified extension of }\RAT_p.
\end{subeqn}
Then the $F_i$s above are tamely ramified over $p$.
\begin{lemma} {\rm(\cite[Lemma 4.6.22]{Paroh})}\label{lemma--choose of Hodge cover}
We can choose the $(G,X)$ above satisfying the following additional conditions.
\begin{enumerate}
\item The center $Z_G$ is a torus.
\item $G_{\RAT_p}$ splits over a tamely ramified extension of $\RAT_p$.
\item Any prime $v_4|p$ of $\mathrm{E}_4$ splits completely in $\mathrm{E}\cdot\mathrm{E}_4$, where $\mathrm{E}$ (resp. $\mathrm{E}_4$) is the reflex field of $(G,X)$ (resp. $(G_4,X_4)$).
\end{enumerate}
\end{lemma}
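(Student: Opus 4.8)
The plan is to revisit Deligne's construction of a Hodge-type lift of the adjoint datum $(G^{\mathrm{ad}},X^{\mathrm{ad}})=(G_4^{\mathrm{ad}},X_4^{\mathrm{ad}})$ and to exploit the freedom in the central torus to force the three conditions. Following \cite[\S2.3]{varideshi} and the construction in \cite{CIMK}, I would fix the derived group $B:=G^{\mathrm{der}}$ (whose isomorphism type is already pinned down in \S\ref{setting--ab type}) and build $G$ as a pushout $G=(B\times T)/Z(B)$, where the centre $Z(B)$ is embedded anti-diagonally through a chosen injection $\iota:Z(B)\hookrightarrow T$ into a $\RAT$-torus $T$, and where $X$ arises from a lift $h$ of the adjoint Hodge cocharacter. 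The key point is that such a pushout has centre $Z_G\cong T$, so condition (1) holds as soon as $T$ is a torus; the existence of a lift $h$ making $(G,X)$ of Hodge type is exactly Deligne's theorem for adjoint data of abelian type. Thus the whole problem reduces to a sufficiently careful choice of the triple $(T,\iota,\mu)$.

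For condition (2), the derived group $B_{\RAT_p}$ already splits over a tamely ramified extension of $\RAT_p$: indeed $B$ is a product of Weil restrictions $\mathrm{Res}_{F_i/\RAT}$ of the groups described in \S\ref{setting--ab type}, with the $F_i$ tamely ramified over $p$ (as recorded after (\ref{assump--ab type})), and these groups are of a type that splits over a tame extension under the running hypothesis. Since $G$ is the almost-direct product of the images of $B$ and $T$, it then suffices to choose $T$ so that $T_{\RAT_p}$ splits over a tamely ramified extension — for instance by taking $T$ to be a product of Weil restrictions $\mathrm{Res}_{L/\RAT}\mathbb{G}_m$ along CM (or totally real) fields $L$ that are tamely ramified at $p$ and large enough to admit $\iota$ and to carry the central part of $\mu$. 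This gives (2).

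The real work is condition (3), which I expect to be the main obstacle. Writing $G^{\mathrm{ab}}:=G/G^{\mathrm{der}}$ for the cocentre, the reflex field $\mathrm{E}=\mathrm{E}(G,\{\mu\})$ is the compositum of $\mathrm{E}^{\mathrm{ad}}=\mathrm{E}_4^{\mathrm{ad}}\subseteq \mathrm{E}_4$ with the field of definition of the image $\mu_{\mathrm{ab}}$ of $\mu$ in the torus $G^{\mathrm{ab}}$; hence the extension $\mathrm{E}/\mathrm{E}^{\mathrm{ad}}$ is governed entirely by $\mu_{\mathrm{ab}}$, which in turn is controlled by the choice of $T$. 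The requirement that every prime $v_4\mid p$ of $\mathrm{E}_4$ split completely in $\mathrm{E}\cdot\mathrm{E}_4$ amounts to the triviality of the local extension $\mathrm{E}\cdot\mathrm{E}_4/\mathrm{E}_4$ at $v_4$, i.e. to $\mathrm{E}$ embedding into the completion $(\mathrm{E}_4)_{v_4}$ compatibly with the fixed place. I would arrange this by choosing the fields $L$ defining $T$, together with the cocharacter $\mu_{\mathrm{ab}}$, so that the field of definition of $\mu_{\mathrm{ab}}$ has the prescribed local behaviour over $p$, moving $\mu$ within its conjugacy class by weak approximation as needed. The delicate part is to achieve this while keeping $(G,X)$ a genuine Shimura datum of Hodge type — the lift $h$ of $h^{\mathrm{ad}}$ must remain in a single component defining $X$ and must carry a polarizable Hodge structure whose weight is defined over $\RAT$ — and while preserving the tameness secured in (2). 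Reconciling the global constraint on $h$ with these two local conditions at $p$ is where the argument becomes genuinely technical, and it is handled by an explicit and sufficiently flexible choice of $(T,\iota,\mu_{\mathrm{ab}})$.
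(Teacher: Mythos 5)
First, a remark on the comparison you are being measured against: the paper does not prove this lemma at all --- it is quoted directly from \cite[Lemma 4.6.22]{Paroh}, and that citation is the entire justification given. So the relevant benchmark is the proof in Kisin--Pappas. Your outline does reproduce its strategy: fix the derived group $B=G^{\mathrm{der}}$ already pinned down in \S\ref{setting--ab type}, realize $G$ as $(B\times T)/Z(B)$ with $Z(B)$ embedded anti-diagonally into a $\RAT$-torus $T$ (so $Z_G\cong T$, giving (1)), observe that $B_{\RAT_p}$ is already tame because the $F_i$ are tame at $p$, and reduce (2) to tameness of $T$. Up to this point your plan is essentially the argument of loc.\ cit.

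The genuine gap is exactly where you announce ``the real work'': condition (3) is never actually established. Your last paragraph describes the constraints that a choice of $(T,\iota,\mu_{\mathrm{ab}})$ would have to satisfy and then asserts that such a choice exists, which is the statement to be proved, not a proof of it. The missing content is the argument of \cite[Lemma 3.4.13]{CIMK} as adapted in \cite{Paroh}: one builds $T$ from $\mathrm{Res}_{K/\RAT}\mathbb{G}_m$ for a totally imaginary quadratic extension $K$ of the totally real field $F=\prod_iF_i$, and one \emph{chooses} $K$ (by Deligne's existence lemma together with weak approximation) so that every prime of $F$ above $p$ splits in $K$. This single choice is what makes everything work simultaneously: it forces the local reflex field of $\mu_{\mathrm{ab}}$ at the places above $p$ to be trivial, so that $\mathrm{E}\subseteq \mathrm{E}^{\mathrm{ad}}\cdot \mathrm{E}(\mu_{\mathrm{ab}})$ with $\mathrm{E}^{\mathrm{ad}}=\mathrm{E}_4^{\mathrm{ad}}\subseteq \mathrm{E}_4$ yields the complete splitting of $v_4$ in $\mathrm{E}\cdot\mathrm{E}_4$; it keeps $T_{\RAT_p}$ (hence $G_{\RAT_p}$) split over a tame extension; and it does not disturb the existence of the polarizable lift $h$ of $h^{\mathrm{ad}}$, which is Deligne's theorem and is insensitive to the local behaviour of $K$ at $p$. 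Without naming this mechanism --- split-at-$p$ CM field plus the decomposition of $\mathrm{E}$ through $\mathrm{E}^{\mathrm{ad}}$ and the cocentre --- the ``reconciliation'' you flag as delicate remains unresolved, and the proposal is a correct plan rather than a proof.
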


We will fix a choice of $(G,X)$ satisfying the conditions in the above lemma. In particular, condition (\ref{conditon--Hodge type}) is satisfied. For $\mathfrak{b}\in \mathcal {B}(G,\mathbb{Q}_p)$, we denote by $G_{\INTP}$ the attached parahoric group scheme, and $G_{\INTP}^\circ$ (resp. $G_{\INTP}^{\mathrm{der}\circ}$, $G_{\INTP}^{\mathrm{ad}\circ}$) the attached parahoric model of $G$ (resp. $G^{\mathrm{der}}$, $G^{\mathrm{ad}}$). Let $Z_{\INTP}$ be the closure of $Z_G$ in $G_{\INTP}$. It is smooth by \cite[Remark 1.1.8]{Paroh}. Let $G_{\INTP}^{\mathrm{ad}}:=G_{\INTP}/Z_{\INTP}$, by \cite[Lemma 4.6.2 (2)]{Paroh} $G_{\INTP}^{\mathrm{ad}\circ}$ is the identity component of $G_{\INTP}^{\mathrm{ad}}$.
 
We set, as in \cite[\S4.5.6]{Paroh}, that
$$\mathscr{A}(G):=G(\mathbb{A}_f)/Z_G(\RAT)^-*_{G(\RAT)_+/Z_G(\RAT)}G^{\mathrm{ad}}(\RAT)^+$$
$$\mathscr{A}(G)^\circ:=G(\RAT)_+^-/Z_G(\RAT)^-*_{G(\RAT)_+/Z_G(\RAT)}G^{\mathrm{ad}}(\RAT)^+;$$
and as in \cite[\S4.6.3]{Paroh}, that
$$\mathscr{A}(G_{\INTP}):=G(\mathbb{A}_f)/Z_G(\INTP)^-*_{G^\circ(\INTP)_+/Z_G(\INTP)}G^{\mathrm{ad}\circ}(\INTP)^+$$
$$\mathscr{A}(G_{\INTP})^\circ:=G^\circ(\INTP)_+^-/Z_G(\INTP)^-*_{G^\circ(\INTP)_+/Z_G(\INTP)}G^{\mathrm{ad}\circ}(\INTP)^+.$$
We refer to loc. cit. for explanations of these notations.

By \cite[2.1.15]{varideshi} (resp. \cite[Lemma 4.6.4 (2)]{Paroh}), $\mathscr{A}(G)^\circ$ (resp. $\mathscr{A}(G_{\INTP})^\circ$) is the completion of $G^{\mathrm{ad}}(\RAT)^+$ (resp. $G^{\mathrm{ad}\circ}_{\INTP}(\INTP)^+$) with respect to the topology whose open subsets are images of congruence subgroups of $G^{\mathrm{der}}(\RAT)$ (resp. images of intersections of $G^{\mathrm{der}\circ}_{\INTP}(\INTP)_+$ with $p$-congruence subgroups). In particular, it depends only on $G^{\mathrm{der}}$ (resp. $G^{\mathrm{der}\circ}_{\INTP}$).

Let $\mathfrak{b_4}\in \mathcal {B}(G_4,\mathbb{Q}_p)$ be such that its image in $\mathcal {B}(G^{\mathrm{ad}},\mathbb{Q}_p)$ coincides with that of $\mathfrak{b}$. We define $G_{4,\INTP}^\circ$, $\mathscr{A}(G_4)$ and $\mathscr{A}(G_4)^\circ$ in the same way as above, but with $$\mathscr{A}(G_{4,\INTP}):=G_4(\mathbb{A}_f)/Z_{G_4}(\INTP)^-*_{G^\circ_4(\INTP)_+/Z_{G_4}(\INTP)}G^{\mathrm{ad}\circ}(\INTP)^+$$ as in \cite[\S4.6.8]{Paroh}. We will then fix a subset $\mathrm{J}\subseteq G_4(\RAT_p)$ which maps bijectively to a set of coset representatives for the image of $\mathscr{A}(G_{4,\INTP})$ in $\mathscr{A}(G)^\circ\backslash\mathscr{A}(G_4)/K_{4,p}^\circ$.

Let $\mathrm{E}_4$ be the reflex field of $(G_4,X_4)$. We will fix, as before, a prime $v_4$ of $\mathrm{E}_4$ over $p$, and get $E_4$ with ring of integers $O_{E_4}$. By Lemma \ref{assump--ab type} (3), $v_4$ extends to a place of $\mathrm{E}'\cdot\mathrm{E}_4$, and the completion is $E_4$. By \cite[Lemma 4.6.13]{Paroh}, we have a canonical identification (i.e. a $G_4(\mathbb{A}_f^p)\times \mathrm{Gal}(\overline{E}/E_4)$-equivariant isomorphism)
\[\Sh_{K_{4,p}^\circ}(G_4,X_4)=[[\Sh_{K_p^\circ}(G,X)^+\times \mathscr{A}(G_{4,\INTP})]/\mathscr{A}(G_{\INTP})^\circ]^{|\mathrm{J}|}.\]

Let $\widetilde{\ES}^\mathrm{ad}_{K_p^\circ}(G,X)^+$ be the $G_{\INT_p}^{\mathrm{ad}}$-torsor on $\ES_{K_p^\circ}(G,X)^+$ induced by $\widetilde{\ES}_{K_p}(G,X)$. By \cite[Lemma 4.5.9]{Paroh}, the $\mathscr{A}(G_{\INTP})^\circ$-action on $\ES_{K_p^\circ}(G,X)^+$ lifts to $\widetilde{\ES}^\mathrm{ad}_{K_p^\circ}(G,X)^+$.
One can then consider the quotients
\begin{align*}
\ES_{K_{4,p}^\circ}(G_4,X_4)&:=\big[[\ES_{K_p^\circ}(G,X)^+\times \mathscr{A}(G_{4,\INTP})]/\mathscr{A}(G_{\INTP})^\circ\big]^{|\mathrm{J}|}\\
\text{and\ \ \ \ }\widetilde{\ES}^\mathrm{ad}_{K_{4,p}^\circ}(G_4,K_4)&:=\big[[\widetilde{\ES}^\mathrm{ad}_{K_p^\circ}(G,X)^+\times \mathscr{A}(G_{4,\INTP})]/\mathscr{A}(G_{\INTP})^\circ\big]^{|\mathrm{J}|}
\end{align*}
\begin{theorem}\label{Thm--constuct int ab type}
Notations as above, we have the followings.
\begin{enumerate}
\item {\rm(\cite[Corollary 4.6.15]{Paroh})} $\ES_{K_{4,p}^\circ}(G_4,X_4)$ is represented by an $O_{E_4}$-scheme, and the $G_4(\mathbb{A}_f^p)$-action on $\Sh_{K_{4,p}^\circ}(G_4,X_4)$ extends to $\ES_{K_{4,p}^\circ}(G_4,X_4)$.
\item {\rm(\cite[Corollary 4.6.18]{Paroh})} $\widetilde{\ES}^\mathrm{ad}_{K_{4,p}^\circ}(G_4,K_4)$ is represented by a $G_{\INT_p}^{\mathrm{ad}}$-torsor over $\ES_{K_{4,p}^\circ}(G_4,K_4)$, and the morphism $q:\widetilde{\ES}_{K_{p}}(G,K)\rightarrow \mathrm{M}^\mathrm{loc}_{G,X}$ induces a morphism $q_4:\widetilde{\ES}^\mathrm{ad}_{K_{4,p}^\circ}(G_4,K_4)\rightarrow \mathrm{M}^\mathrm{loc}_{G,X}$ which is smooth and $G_{\INT_p}^{\mathrm{ad}}$-equivariant.
\end{enumerate}
\end{theorem}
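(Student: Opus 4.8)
The plan is to read both statements as the integral-model incarnation of Deligne's passage from Hodge type to abelian type: the displayed formulas \emph{define} $\ES_{K_{4,p}^\circ}(G_4,X_4)$ and $\widetilde{\ES}^\mathrm{ad}_{K_{4,p}^\circ}(G_4,X_4)$ as quotients, so the real content is (i) that these quotients are representable by schemes, and (ii) that the Hecke action, the torsor structure and the map to the local model survive the quotient. Throughout I would use the generic-fibre identification displayed just before the statement as the template, since the corresponding statements over $E$ are classical, and reduce every new point to the behaviour of the integral structure under the $\mathscr{A}(G_{\INTP})^\circ$-action.

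For part (1), I would first record that $\mathscr{A}(G_{\INTP})^\circ$ acts on $\ES_{K_p^\circ}(G,X)^+$: the $G^{\mathrm{ad}}(\RAT)^+$-action preserves the generic fibre and hence extends uniquely to the normalization $\ES_{K_p^\circ}(G,X)^+$, and by the description of $\mathscr{A}(G_{\INTP})^\circ$ as a completion recalled after Lemma \ref{lemma--choose of Hodge cover} this promotes to the full $\mathscr{A}(G_{\INTP})^\circ$-action. Since $\mathscr{A}(G_{4,\INTP})$ is a disjoint union of cosets as a set, the product $\ES_{K_p^\circ}(G,X)^+\times\mathscr{A}(G_{4,\INTP})$ is a disjoint union of copies of $\ES_{K_p^\circ}(G,X)^+$ on which $\mathscr{A}(G_{\INTP})^\circ$ acts. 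The crucial reduction is that, after fixing a prime-to-$p$ level, the action factors through a \emph{finite} quotient of $\mathscr{A}(G_{\INTP})^\circ$; the quotient is then a finite-group quotient of a quasi-projective $O_E$-scheme, which exists because orbits lie in affine opens. Taking the $|\mathrm{J}|$ copies, gluing as $K_4^p$ varies, and descending from $E$ to $E_4$ — where condition (3) of Lemma \ref{lemma--choose of Hodge cover}, that $v_4$ splits completely, is exactly what makes the Galois descent available — yields the $O_{E_4}$-scheme. The $G_4(\mathbb{A}_f^p)$-action is then the one induced by right translation on the factor $\mathscr{A}(G_{4,\INTP})$, which commutes with the $\mathscr{A}(G_{\INTP})^\circ$-action and so descends; it restricts to the Hecke action on the generic fibre and agrees with it uniquely by normality.

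For part (2), I would run the identical quotient construction on the $G_{\INT_p}^{\mathrm{ad}}$-torsor $\widetilde{\ES}^\mathrm{ad}_{K_p^\circ}(G,X)^+$, whose $\mathscr{A}(G_{\INTP})^\circ$-action is the lift recalled before the statement from \cite[Lemma 4.5.9]{Paroh}. Because that lift commutes with the $G_{\INT_p}^{\mathrm{ad}}$-action, the representability argument of part (1) applies verbatim and the torsor structure descends, exhibiting $\widetilde{\ES}^\mathrm{ad}_{K_{4,p}^\circ}(G_4,X_4)$ as a $G_{\INT_p}^{\mathrm{ad}}$-torsor over $\ES_{K_{4,p}^\circ}(G_4,X_4)$. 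For the morphism, I would first observe that the $G_{\INT_p}$-action on $\mathrm{M}^{\mathrm{loc}}_{G,X}\subseteq\mathrm{GL}(L_{\mathbb{Z}_p})/P'$ factors through $G_{\INT_p}^{\mathrm{ad}}$, the center acting trivially on the flag variety, so the map $q$ of Theorem \ref{result P-Z and K-P}(3) descends to a $G_{\INT_p}^{\mathrm{ad}}$-equivariant morphism $\widetilde{\ES}^\mathrm{ad}_{K_p^\circ}(G,X)^+\to\mathrm{M}^{\mathrm{loc}}_{G,X}$. As the target is untouched by the twisting by $\mathscr{A}(G_{4,\INTP})$ and the map is $\mathscr{A}(G_{\INTP})^\circ$-invariant, it descends through the quotient to the desired $q_4$; its $G_{\INT_p}^{\mathrm{ad}}$-equivariance is immediate, and its smoothness descends from that of $q$ along the faithfully flat (indeed finite étale, as in Theorem \ref{result P-Z and K-P}(4)) quotient map on the sources.

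The hard part will be representability in part (1): one must genuinely verify that the $\mathscr{A}(G_{\INTP})^\circ$-action factors through finite quotients at each prime-to-$p$ level, that the resulting finite-group quotients of $\ES_{K_p^\circ}(G,X)^+$ exist and remain normal and flat over $O_{E_4}$ with the expected special fibre (reduced, no extraneous components), and that the descent from $E$ to $E_4$ is valid — the last point resting entirely on the complete splitting of $v_4$. The generic-fibre analogue is formal, so the delicate work is controlling the completion $\mathscr{A}(G_{\INTP})^\circ$ against the parahoric level structure integrally; once part (1) is in place, part (2) is essentially forced by the equivariance already built into $q$.
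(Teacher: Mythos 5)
First, a structural point: the paper offers no proof of this statement at all --- both parts are quoted verbatim from \cite[Corollaries 4.6.15 and 4.6.18]{Paroh}, with the citations built into the statement itself --- so your proposal has to be measured against the argument in that reference rather than against anything in this paper. Your overall architecture (read the displayed formulas as definitions, check representability of the quotients at finite prime-to-$p$ level, then push the torsor structure and the local-model map through the quotient) is indeed the shape of the Kisin--Pappas, and before that Deligne--Kisin, construction.

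There is, however, a genuine gap at the very first step of your part (1). You assert that the $G^{\mathrm{ad}}(\RAT)^+$-action on the generic fibre ``extends uniquely to the normalization $\ES_{K_p^\circ}(G,X)^+$.'' No such general principle exists: an automorphism of the generic fibre of a normal scheme need not extend to the scheme, and here $\ES_{K_p^\circ}(G,X)^+$ is cut out inside a Siegel moduli space by a symplectic embedding that the $G^{\mathrm{ad}}(\RAT)^+$-action does not respect --- this action is not a $\mathrm{GSp}$-Hecke operator, so it does not come from the ambient moduli space. Producing the action integrally is the central difficulty of the whole abelian-type construction; in \cite[\S 4.4--4.5]{Paroh} it is done by twisting the universal abelian scheme by a $Z_{\INTP}$-torsor attached to $\gamma\in G^{\mathrm{ad}}(\RAT)^+$ and invoking an extension (N\'{e}ron-type) property of the Hodge-type model, and the present paper simply imports the output as \cite[Lemma 4.5.9]{Paroh} in the sentence preceding the theorem. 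A second, smaller omission: for the finite-level quotients you need not merely that the action factors through a finite group, but that the stabilizers --- controlled by $\ker\big(\mathscr{A}(G_{\INTP})^\circ\rightarrow\mathscr{A}(G_{4,\INTP})\big)$ --- act \emph{freely}; this is a separate lemma in \cite{Paroh}, and it is exactly what makes the quotient map finite \'{e}tale, a fact you then use implicitly in part (2) to descend the torsor structure and the smoothness of $q_4$. With those two inputs supplied, the remainder of your sketch (the center acting trivially on $\mathrm{M}^{\mathrm{loc}}_{G,X}$, invariance of $q$ under the lifted action, descent to $O_{E_4}$ via the splitting hypothesis of Lemma \ref{lemma--choose of Hodge cover}(3)) tracks the reference correctly.
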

\subsection{}\label{conventions} Before finishing this section, we fix some notations and conventions that will be used in the following sections.
\subsubsection{}\label{conven--shv} We will sometimes mention only the level subgroups in notations related to Shimura varieties. For example, for the Shimura datum $(G,X)$ and level $K$, we write $\Sh_K$ for the attached Shimura variety $\Sh_K(G,X)$. Similarly, we have $\ES_K$, $\widetilde{\ES}_K$ $\widetilde{\ES}^\mathrm{ad}_K$; and, if we change $K$ to $K^\circ$, we have $\Sh_{K^\circ}$, $\ES_{K^\circ}$, etc. For the Shimura datum $(G',X')$ (resp. $(G_1,X_1)$) and level $K'$ (resp. $K_1$), the attached objectives will be denoted by $\Sh_{K'}$ (resp. $\Sh_{K_1}$), $\ES_{K'}$ (resp. $\ES_{K_1}$), etc, and similarly if we change $K'$ (resp. $K_1$) to $K'^\circ$ (resp. $K_1^\circ$). Their (local) reflex fields will be denoted by $E$, $E'$ and $E_1$, with rings of integers $O_E$, $O_{E'}$ and $O_{E_1}$ respectively.

\subsubsection{}\label{conven--morp} Let $S\rightarrow S'$ be a morphism, with $X/S$ and $X'/S'$. By a morphism $X\rightarrow X'$, we will \emph{always} assume that the following diagram is commutative.
$$\xymatrix{
X\ar[r]\ar[d]& X'\ar[d]\\
S\ar[r]      & S'}$$

\section[Independence of symplectic embeddings]{Independence of symplectic embeddings}





\subsection{}\label{sum of ebd} Let $i_1:(G,X)\hookrightarrow (\mathrm{GSp}(V_1,\psi_1),S^{\pm}_1)$ and $i_2:(G,X)\hookrightarrow (\mathrm{GSp}(V_2,\psi_2),S^{\pm}_2)$ be two symplectic embeddings. We can construct another symplectic embedding as follows. By the definition of symplectic similitude groups, there is a character $\chi_1:\mathrm{GSp}(V_1,\psi_1)\rightarrow \mathbb{G}_m$, such that $\mathrm{GSp}(V_1,\psi_1)$ acts on $\psi_1$ via $\chi_1$. Note that changing $\chi_1$ to a power of it will not change the symplectic similitude group. Similarly, we have $\chi_2:\mathrm{GSp}(V_2,\psi_2)\rightarrow \mathbb{G}_m$. Let $w:\mathbb{G}_m\rightarrow G$ be the weight cocharacter of $G$. Then $\chi_1\circ w$ and $\chi_2\circ w$ are two characters $\mathbb{G}_m\rightarrow \mathbb{G}_m$ of weights $m_1$ and $m_2$ respectively. After changing $\chi_1$ to $\chi_1^{m_2}$ and $\chi_2$ to $\chi_2^{m_1}$, we see that $G$ acts on $\psi_1$ and $\psi_2$ via the same character. We set $V_3:=V_1\oplus V_2$, and define $\psi_3: V_3\times V_3\rightarrow \mathbb{Q}$,
$$\psi_3\big((v_1,v_2),(v_1',v_2')\big):=\psi_1(v_1,v_1')+\psi_2(v_2,v_2'),\ \ \ \forall\ v_1,v_1'\in V_1 \ \mathrm{and}\ \forall\ v_2,v_2'\in V_2.$$ Then $G\subseteq \mathrm{GSp}(V_3,\psi_3)$, and this embedding induces an embedding of Shimura data $$i_3:(G,X)\hookrightarrow (\mathrm{GSp}(V_3,\psi_3),S_3^{\pm}).$$

\subsection{}\label{comp normal mod} Let $i_1:(G,X)\rightarrow (\mathrm{GSp}(V_1,\psi_1),S^{\pm}_1)$ and $i_2:(G,X)\rightarrow (\mathrm{GSp}(V_2,\psi_2),S^{\pm}_2)$ be as in \S\ref{para int setting} with lattices $V_{t,\INT}\subseteq V_t$, $t=1,2$. By constructions as above, we have a third symplectic embedding $i_3:(G,X)\rightarrow (\mathrm{GSp}(V_3,\psi_3),S^{\pm}_3)$ as well as a lattice $V_{3,\INT}:=V_{1,\INT}\oplus V_{2,\INT}\subseteq V_3$ which also satisfies conditions in \S\ref{para int setting}.

For $t=1,2,3$, we set $d_t=|V_{t,\INT}^\vee/V_{t,\INT}|$ and $g_t=\frac{1}{2}\mathrm{dim}(V_t)$. Clearly, $g_3=g_1+g_2$ and $d_3=d_1d_2$. For $n\geq 3$ an integer such that $(n,p)=1$, let $\mathscr{A}_{g_t,d_t,n}$ be the moduli scheme of abelian schemes over $\mathbb{Z}_{(p)}$-schemes of relative dimension $g_t$ with a polarization $\lambda_t$ of degree $d_t$ and a level $n$ structure $\eta_t$, with $(\mathcal {A}_t,\lambda_t,\eta_t)$ the universal family on it. For $t=1,2$, let $p_t:\mathscr{A}_{g_1,d_1,n}\times \mathscr{A}_{g_2,d_2,n}\rightarrow \mathscr{A}_{g_t,d_t,n}$ be the $t$-th projection. There is also a unique morphism $$j:\mathscr{A}_{g_1,d_1,n}\times \mathscr{A}_{g_2,d_2,n}\longrightarrow \mathscr{A}_{g_3,d_3,n}$$ such that $j^*(\mathcal {A}_3,\lambda_3,\eta_3)=p_1^*(\mathcal {A}_1,\lambda_1,\eta_1)\times p_2^*(\mathcal {A}_2,\lambda_2,\eta_2)$.

Let $K^p\subseteq G(\mathbb{A}_f^p)$ be small enough such that there are morphisms $\Sh_K\rightarrow \mathscr{A}_{g_1,d_1,n}$ and $\Sh_K\rightarrow \mathscr{A}_{g_2,d_2,n}$. We then have a commutative diagram
$$\xymatrix@C=0.6cm{& \mathscr{A}_{g_2,d_2,n}&\\
\Sh_K\ar[rr]^(0.4){i_1\times i_2}\ar[dr]_{i_1}\ar[ur]^{i_2}&& \mathscr{A}_{g_1,d_1,n}\times \mathscr{A}_{g_2,d_2,n}\ar[r]^(0.6){j} \ar[dl]^{p_1}\ar[ul]_{p_2}&\mathscr{A}_{g_3,d_3,n}.\\
& \mathscr{A}_{g_1,d_1,n}&
}\leqno $$
Let $\ES_{1,K}^-$ (resp. $\ES_{2,K}^-$, $\ES_{12,K}^-$, $\ES_{3,K}^-$) be the scheme theoretic image of $i_1$ (resp. $i_2$, $i_1\times i_2$, $i_3:=j\circ(i_1\times i_2)$), and $\ES_{1,K}$ (resp. $\ES_{2,K}$, $\ES_{12,K}$, $\ES_{3,K}$) be its normalization. We have the following induced diagram of integral models of $\Sh_K$
$$\xymatrix{\ES_{1,K}&\ES_{12,K}\ar[d]^{j}\ar[l]_{p_1}\ar[r]^{p_2}& \ES_{2,K}\\
&\ES_{3,K}.
}$$

For $t=1,2,3$, $\ES_{t,K}$ is the integral model for $\Sh_K$ as in \S\ref{para int setting} induced by the embedding $i_t$.
\begin{lemma}\label{i' isom}
The morphism $j:\ES_{12,K}\rightarrow \ES_{3,K}$ is an isomorphism.
\end{lemma}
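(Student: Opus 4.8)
The plan is to show that $j\colon \ES_{12,K}\to\ES_{3,K}$ is a finite morphism that is an isomorphism on generic fibres, and then to conclude by the standard fact that a finite birational morphism onto a normal scheme is an isomorphism. Both $\ES_{12,K}$ and $\ES_{3,K}$ are normal and $O_E$-flat with generic fibre $\Sh_K$, being the integral models attached to the embeddings $i_1\times i_2$ and $i_3$ in the sense of \S\ref{para int setting}. Granting finiteness and genericity, the map $\mathcal{O}_{\ES_{3,K}}\to j_*\mathcal{O}_{\ES_{12,K}}$ is an injection of coherent sheaves of algebras sharing the same total ring of fractions; since $\ES_{3,K}$ is integrally closed this forces equality, whence $j$ is an isomorphism.

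For the statement on generic fibres, note that by construction $\ES_{12,K}\otimes_{O_E}E=\Sh_K=\ES_{3,K}\otimes_{O_E}E$, and since $i_3=j\circ(i_1\times i_2)$ the morphism $j_E$ is compatible with the canonical maps from $\Sh_K$ into both models, so $j_E=\mathrm{id}_{\Sh_K}$ and $j$ is birational. One also has to justify that $j$ carries $\ES^-_{12,K}$ into $\ES^-_{3,K}$ in the first place: because $\Sh_K\to\ES^-_{12,K}$ is scheme-theoretically dominant and $i_3=j\circ(i_1\times i_2)$ factors through $\ES^-_{3,K}$, the scheme-theoretic image of the restriction of $j$ to $\ES^-_{12,K}$ is exactly $\ES^-_{3,K}$, which is how the morphism $j$ in the diagram is defined.

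The real content is finiteness, which I would deduce from the finiteness of the product morphism $j\colon \mathscr{A}_{g_1,d_1,n}\times\mathscr{A}_{g_2,d_2,n}\to\mathscr{A}_{g_3,d_3,n}$ itself. This morphism is separated of finite type, so it suffices to show it is proper and quasi-finite, a separated quasi-finite proper morphism being finite. Quasi-finiteness is the statement that a polarized abelian variety with level structure has only finitely many decompositions as a product of the prescribed dimensions, polarization, and level structure, which follows from the finiteness of the set of abelian subvarieties of a fixed polarized abelian variety. Properness I would verify with the valuative criterion: given an abelian scheme $A_R$ over a DVR $R$ whose generic fibre splits as $A_{1,F}\times A_{2,F}$ compatibly with the polarization and level, the Zariski closures of the factors in the proper smooth total space are abelian subschemes, and the resulting homomorphism $A_{1,R}\times A_{2,R}\to A_R$ is an isogeny that is an isomorphism on the generic fibre, hence an isomorphism; uniqueness of the lift follows from separatedness of the moduli problem. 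With $j$ finite, its restriction to the closed subscheme $\ES^-_{12,K}$ is finite onto $\ES^-_{3,K}$, and passing to normalizations (using the universal property of $\ES_{3,K}$ as the normalization of $\ES^-_{3,K}$ in $\Sh_K$) yields that $j\colon\ES_{12,K}\to\ES_{3,K}$ is finite.

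I expect the properness step to be the main obstacle, since it is where the geometry of abelian schemes is used in an essential way; in particular one needs that the closure of an abelian subvariety in an abelian scheme over a DVR is again an abelian subscheme. It is worth stressing that $j$ is genuinely \emph{not} a closed immersion in general — distinct polarized leveled abelian varieties can have isomorphic products, as in the coincidences $E\times E'\cong E''\times E'''$ among CM elliptic curves — so the monomorphism route is unavailable, and the finite-plus-birational argument is what makes the comparison of the two normal models go through.
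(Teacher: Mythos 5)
Your argument reaches the right conclusion, but by a genuinely different and heavier route than the paper. The paper never proves (or needs) finiteness of the moduli morphism $j\colon\mathscr{A}_{g_1,d_1,n}\times\mathscr{A}_{g_2,d_2,n}\to\mathscr{A}_{g_3,d_3,n}$; instead it directly constructs a section of $j\colon\ES_{12,K}\to\ES_{3,K}$. Concretely, the projection idempotents $\pi_1,\pi_2$ of $\mathcal{A}_3|_{\Sh_K}=\mathcal{A}_1|_{\Sh_K}\times\mathcal{A}_2|_{\Sh_K}$ extend uniquely to endomorphisms of $\mathcal{A}_3|_{\ES_{3,K}}$ by Faltings--Chai, Ch.\ I, Prop.\ 2.7 (extension of homomorphisms of abelian schemes over a normal base from a dense open), so the product decomposition, together with $\lambda_3=\lambda_1\times\lambda_2$ and $\eta_3=\eta_1\times\eta_2$, spreads out over all of $\ES_{3,K}$ and gives a morphism $\ES_{3,K}\to\mathscr{A}_{g_1,d_1,n}\times\mathscr{A}_{g_2,d_2,n}$ splitting $j$; since both models have generic fiber $\Sh_K$, $j$ is then an isomorphism. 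This is the same key input as your valuative-criterion step (the DVR case is a special case of the same extension theorem), but applied once over the whole normal scheme $\ES_{3,K}$, so that neither properness nor quasi-finiteness of the ambient $j$ ever enters. Your approach, if completed, proves the stronger and independently interesting fact that $j$ itself is finite, and your observation that $j$ is not a monomorphism is correct and worth keeping in mind.

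The cost of your route is the quasi-finiteness step, and there your justification is wrong as stated: a polarized abelian variety can have infinitely many abelian subvarieties (e.g.\ the graphs of multiplication by $n$ give infinitely many distinct copies of $E$ inside $E\times E$), so ``finiteness of the set of abelian subvarieties'' is false. What quasi-finiteness of $j$ actually requires is finiteness \emph{up to isomorphism} of the direct factors of an abelian variety (a Jordan--Zassenhaus type theorem on the endomorphism ring), combined with finiteness, up to automorphism, of polarizations of bounded degree (Narasimhan--Nori) and of level structures. With those inputs substituted, your finite-plus-birational-onto-a-normal-scheme argument does go through; but these are precisely the nontrivial finiteness theorems that the paper's section construction avoids.
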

\begin{proof} Notations as above, we have, on $\ES_{3,K}$, a polarized abelian scheme with level $n$-structure $(\mathcal {A}_3, \lambda_3, \eta_3)|_{\ES_{3,K}}$. By construction, $\mathcal {A}_3|_{\Sh_K}=\mathcal {A}_1|_{\Sh_K}\times \mathcal {A}_2|_{\Sh_K}$. Let $\pi_t$, $t=1,2$, be the endomorphism of $\mathcal {A}_3|_{\Sh_K}$ which is the identity on $\mathcal {A}_t|_{\Sh_K}$ and zero on $\mathcal {A}_{3-t}|_{\Sh_K}$, then by \cite{deg of abv} Chapter 1, Proposition 2.7, $\pi_t$ extends uniquely to $\ES_{3,K}$ and hence $\mathcal {A}_3|_{\ES_{3,K}}=\mathcal {A}_1|_{\ES_{3,K}}\times \mathcal {A}_2|_{\ES_{3,K}}$. But then on $\ES_{3,K}$, we have $\lambda_3=\lambda_1\times \lambda_2$, and $\eta_3=\eta_1\times \eta_2$, as it is so on $\Sh_K$. In particular, $(\mathcal {A}_3, \lambda_3, \eta_3)|_{\ES_{3,K}}$ induces a morphism to $\mathscr{A}_{g_1,d_1,n}\times \mathscr{A}_{g_2,d_2,n}$, and hence a section of $j$. Then $j$ is an isomorphism, as $\ES_{12,K}$ and $\ES_{3,K}$ has the same generic fiber.
\end{proof}

We will identify $\ES_{12,K}$ and $\ES_{3,K}$ from now on. Furthermore, by taking normalizations in $\Sh_{K^\circ}$, we can replace $K$ by $K^\circ$ in the above diagram. Let $s_t\in V_{t,\mathbb{Z}_{(p)}}^\otimes$ be a tensor defining $G_{\INTP}$, it induces a tensor $s_{t,\mathrm{dR}}\in\V_t^\otimes$, where $\V_t:=\Hdr(\mathcal {A}_t/\ES_{t,K})$. Let $\V^1_t\subseteq \V_t$ the Hodge filtration; and $\widetilde{\ES}_{t,K}$ be the corresponding $G_{\INTP}$-torsor on $\ES_{t,K}$ as in Theorem \ref{result P-Z and K-P} (2). We then have diagrams $\ES_{t,K}\stackrel{\pi_t}{\longleftarrow}\widetilde{\ES}_{t,K}\stackrel{q_t}{\longrightarrow} \mathrm{M}^\mathrm{loc}_{G,X}$ as in Theorem \ref{result P-Z and K-P} (3).

Let
$\widetilde{\ES}'_{3,K}:=\Isom_{\ES_{3,K}}\big((V^\vee_{3,\INT_p}\supseteq V^\vee_{1,\INT_p},s_3, s_1)\otimes O_{\ES_{3,K}},\ (\V_3\supseteq\V_1, s_{3,\dr},s_{1,\dr})\big).$

\begin{lemma}\label{I is torsor}
The inclusion $\widetilde{\ES}'_{3,K}\subseteq \widetilde{\ES}_{3,K}$ is an isomorphism.
\end{lemma}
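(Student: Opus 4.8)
The plan is to exhibit both $\widetilde{\ES}'_{3,K}$ and $\widetilde{\ES}_{3,K}$ as torsors under $G_{\INT_p}$ and observe that the inclusion is $G_{\INT_p}$-equivariant, whence an isomorphism; the only genuine content will be the fppf-local nonemptiness of $\widetilde{\ES}'_{3,K}$, which I would reduce to the generic fibre. First I would make the decomposition explicit. By Lemma \ref{i' isom} we have $\mathcal{A}_3=\mathcal{A}_1\times\mathcal{A}_2$ over $\ES_{3,K}$, so the K\"unneth formula gives a decomposition $\V_3=\V_1\oplus\V_2$ of filtered bundles matching the lattice decomposition $V_{3,\INT_p}^\vee=V_{1,\INT_p}^\vee\oplus V_{2,\INT_p}^\vee$. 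Writing $e_1\in\mathrm{End}(V_{3,\INT_p}^\vee)$ for the projector onto $V_{1,\INT_p}^\vee$ and $e_{1,\dr}\in\mathrm{End}(\V_3)$ for the K\"unneth projector onto $\V_1$, the requirement that a frame $f$ carry the submodule $V_{1,\INT_p}^\vee$ onto $\V_1$ is the closed condition $f\circ e_1=e_{1,\dr}\circ f$. Thus $\widetilde{\ES}'_{3,K}$ is exactly the closed subscheme of $\widetilde{\ES}_{3,K}$ cut out by the extra conditions $f(s_1)=s_{1,\dr}$ and $f(e_1)=e_{1,\dr}$.

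The group-theoretic input comes next. Since $i_3$ is the sum of $i_1$ and $i_2$, the group $G$ acts diagonally on $V_3=V_1\oplus V_2$ and hence fixes both the projector $e_1$ and the tensor $s_1\in V_{1,\INTP}^\otimes\subseteq V_{3,\INTP}^\otimes$. As fixing a given tensor (or respecting a given direct summand) is a closed condition, the schematic closure $G_{\INTP}$ of $G$ in $\mathrm{GL}(V_{3,\INTP})$ also fixes $e_1$ and $s_1$; because $s_3$ defines $G_{\INTP}$, i.e. $G_{\INTP}=\mathrm{Stab}(s_3)$, this forces the equality of stabilizers $\mathrm{Stab}(s_3)=\mathrm{Stab}(s_3,s_1,e_1)=G_{\INTP}$, and likewise after base change to $\INT_p$. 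Consequently the inclusion $\widetilde{\ES}'_{3,K}\hookrightarrow\widetilde{\ES}_{3,K}$ is a $G_{\INT_p}$-equivariant closed immersion of a $G_{\INT_p}$-pseudo-torsor into the $G_{\INT_p}$-torsor of Theorem \ref{result P-Z and K-P}(2).

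To finish I would reduce to the generic fibre by flatness. The model $\ES_{3,K}$, and hence the smooth $G_{\INT_p}$-torsor $\widetilde{\ES}_{3,K}$ over it, is flat over $O_E$, so the generic fibre $(\widetilde{\ES}_{3,K})_E$ is schematically dense. It therefore suffices to prove $\widetilde{\ES}'_{3,K}\otimes_{O_E}E=\widetilde{\ES}_{3,K}\otimes_{O_E}E$: the closed subscheme $\widetilde{\ES}'_{3,K}$ then contains the schematic closure of $(\widetilde{\ES}_{3,K})_E$, which is all of $\widetilde{\ES}_{3,K}$, and the inclusion is an equality. Over $E$ both sides are torsors under the same group $G_{\RAT_p}$ by the stabilizer computation above, so a $G_{\RAT_p}$-equivariant inclusion between them is an isomorphism as soon as the left-hand side is nonempty fppf-locally on $\Sh_K$.

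I expect this last point --- the local existence, on $\Sh_K$, of a single de Rham frame matching $(s_3,s_1,e_1)$ with $(s_{3,\dr},s_{1,\dr},e_{1,\dr})$ simultaneously --- to be the crux. It encodes the compatibility of the formation of the de Rham tensors with the product decomposition $\mathcal{A}_3=\mathcal{A}_1\times\mathcal{A}_2$: the $s_{t,\dr}$ are the de Rham realizations of the $s_t$ and $e_{1,\dr}$ is the K\"unneth projector, and the de Rham comparison respects the K\"unneth decomposition because $\Hdr$ and the comparison isomorphism are functorial for the projections $\mathcal{A}_3\to\mathcal{A}_t$. Granting this compatibility, a frame matching $s_3$ can be adjusted to respect the whole decomposition, so $\widetilde{\ES}'_{3,K}\otimes_{O_E}E$ is a torsor and the chain of reductions closes the argument; everything apart from this generic-fibre compatibility is formal.
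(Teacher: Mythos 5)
Your argument is correct and is essentially the paper's own proof written out in full: the paper likewise uses that $G_{\INT_p}$ stabilizes $V_{1,\INT_p}^\vee$ and $s_1$ (phrased there as the claim that $\V_1$ and $s_{1,\dr}$ are the constructions $V_{1,\INT_p}^\vee\times^{G_{\INT_p}}\widetilde{\ES}_{3,K}$ and $s_1\times^{G_{\INT_p}}\widetilde{\ES}_{3,K}$ associated to the torsor, which is the same as your pseudo-torsor statement), and reduces the remaining identity to the generic fibre and then to $\mathbb{C}$, where it is the standard compatibility of the de Rham/Betti realizations with the K\"unneth decomposition cited from \cite[\S2.2]{CIMK}. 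One cosmetic remark: the condition $f\circ e_1=e_{1,\dr}\circ f$ is a priori stronger than the defining condition $f(V_{1,\INT_p}^\vee)\subseteq\V_1$ of $\widetilde{\ES}'_{3,K}$, but this is harmless since you prove the smaller locus already exhausts $\widetilde{\ES}_{3,K}$.
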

\begin{proof}
It suffices to check that $\V_1=V^\vee_{1,\INT_p}\times^{G_{\INT_p}} \widetilde{\ES}_{3,K}$, and that $s_{1,\dr}:O_{\ES_{3,K}}\rightarrow \V_1^\otimes$ is the same as $(\INT_p\stackrel{s_1}{\rightarrow} V^\otimes_{1,\INT_p})\times^{G_{\INT_p}} \widetilde{\ES}_{3,K}$. But we can pass to the generic fiber first and then to $\mathbb{C}$. Now the statements become standard facts, see e.g. \cite[\S2.2]{CIMK}.
\end{proof}
\begin{theorem}\label{uniq int model}
For $t=1,2$, the morphism $p_t:\ES_{3,K}\rightarrow \ES_{t,K}$ is an isomorphism. Moreover, the same statement holds if we change $K$ to $K^\circ$.
\end{theorem}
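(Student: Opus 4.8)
The plan is to reduce everything to the single assertion that $p_1\colon\ES_{3,K}\to\ES_{1,K}$ is an isomorphism. The case of $p_2$ is identical after interchanging the two factors, and once both $p_1,p_2$ are isomorphisms we get $\ES_{1,K}\cong\ES_{3,K}\cong\ES_{2,K}$. The statement for $K^\circ$ then follows formally: by definition $\ES_{t,K^\circ}$ is the normalization of $\ES_{t,K}$ in the finite \'etale cover $\Sh_{K^\circ}\to\Sh_K$, so an isomorphism $p_1\colon\ES_{3,K}\to\ES_{1,K}$ restricting to the identity on $\Sh_K$ induces, by functoriality of normalization in a fixed generic fibre, an isomorphism $\ES_{3,K^\circ}\to\ES_{1,K^\circ}$.

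Next I would isolate the formal mechanism. The morphism $p_1$ restricts to the identity on the common generic fibre $\Sh_K$, and both $\ES_{3,K}$ and $\ES_{1,K}$ are normal and flat of finite type over $O_E$ with the same reduced generic fibre; in particular $p_1$ is birational. Granting these, the theorem reduces to the claim that $p_1$ is \emph{finite}. Indeed, if $p_1$ is finite then $(p_1)_*\mathcal O_{\ES_{3,K}}$ is a coherent sheaf of $\mathcal O_{\ES_{1,K}}$-algebras, integral over $\mathcal O_{\ES_{1,K}}$ and contained in its total ring of fractions, and generically equal to $\mathcal O_{\ES_{1,K}}$; since $\ES_{1,K}$ is normal this inclusion must be an equality, so $p_1$ is an isomorphism. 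Equivalently, a finite birational $\ES_{3,K}\to\ES_{1,K}$ with normal target is the normalization of $\ES_{1,K}$ in $\Sh_K$, which is $\ES_{1,K}$ itself. So the whole content is the finiteness of $p_1$.

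To attack finiteness I would bring in the torsors and the local model. By Lemma \ref{I is torsor} a point of $\widetilde{\ES}_{3,K}$ is an isomorphism matching the flag $V^\vee_{1,\INT_p}\subseteq V^\vee_{3,\INT_p}$ and the tensors $s_3,s_1$ with $\V_1\subseteq\V_3$ and $s_{3,\dr},s_{1,\dr}$; restricting such an isomorphism to $V^\vee_{1,\INT_p}\to\V_1$ gives a $G_{\INT_p}$-equivariant morphism $\tilde p_1\colon\widetilde{\ES}_{3,K}\to\widetilde{\ES}_{1,K}$ lying over $p_1$. Since $\mathrm M^{\mathrm{loc}}_{G,X}$ depends only on $(G_{\mathbb Q_p},\{\mu\})$ by Theorem \ref{result P-Z and K-P}(1), the two local model maps $q_3,q_1$ of Theorem \ref{result P-Z and K-P}(3) have the same target and satisfy $q_1\circ\tilde p_1=q_3$, with $q_1,q_3$ smooth of equal relative dimension (the two torsors have equal dimension because $\ES_{3,K}$ and $\ES_{1,K}$ are flat over $O_E$ with a common generic fibre). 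As $p_1$ is the $G_{\INT_p}$-quotient of $\tilde p_1$, it suffices to treat $\tilde p_1$; and on the generic fibre, where both $q$'s identify their sources with the Shimura variety data, $\tilde p_1$ is an isomorphism. Thus the entire problem is concentrated on the special fibre.

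The main obstacle is precisely this special-fibre control: a priori $\ES_{3,K}$ could acquire extra or positive-dimensional fibres over $\ES_{1,K}$ in characteristic $p$, and neither \'etaleness of $\tilde p_1$ (via the relative cotangent sequence over $\mathrm M^{\mathrm{loc}}_{G,X}$) nor quasi-finiteness is automatic from the equality of relative dimensions. I would remove this by proving $p_1$ is proper through the valuative criterion: given a discrete valuation ring $R$ with a point of $\ES_{1,K}$ lifting generically to $\ES_{3,K}$, the point of $\ES_{1,K}$ yields an abelian scheme $\mathcal A_1$ over $R$, i.e. good reduction, whose generic fibre carries the $G$-structure; the companion $\mathcal A_2$ over $\mathrm{Frac}(R)$ is built from the same $G$-valued datum, so good reduction of $\mathcal A_1$ forces the associated $p$-adic representation to be crystalline and hence $\mathcal A_2$ to have good reduction, producing the required $R$-point of $\ES_{3,K}$ via $\mathcal A_3=\mathcal A_1\times\mathcal A_2$ as in Lemma \ref{i' isom}. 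The local model diagram supplies the complementary local input: compatibility of $\tilde p_1$ with the smooth maps $q_1,q_3$ to the common $\mathrm M^{\mathrm{loc}}_{G,X}$ identifies the complete local rings of $\ES_{3,K}$ and $\ES_{1,K}$ at corresponding points, so $p_1$ is quasi-finite (indeed \'etale and radicial) there. Proper and quasi-finite gives finite, and the normalization argument of the second paragraph then concludes. I expect the propagation of good reduction from $\mathcal A_1$ to $\mathcal A_2$ — the one genuinely arithmetic ingredient — to be the crux of the proof.
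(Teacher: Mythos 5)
Your proposal is correct in substance and rests on exactly the two pillars of the paper's own proof: the identification $\widetilde{\ES}_{3,K}\cong p_1^*\widetilde{\ES}_{1,K}$ furnished by Lemma \ref{I is torsor}, which via the common local model $\mathrm{M}^{\mathrm{loc}}_{G,X}$ identifies completed local rings at closed points of the special fibres, and the N\'{e}ron--Ogg--Shafarevich argument (good reduction propagates from $\mathcal{A}_1$ to $\mathcal{A}_2$ because the $\ell$-adic monodromy factors through the faithful representation $G\hookrightarrow \mathrm{GL}(V_1)$) to cross the special fibre. The difference is in the packaging: you feed the arithmetic input into the valuative criterion to get properness, then conclude via ``proper $+$ quasi-finite $=$ finite'' and ``finite birational onto normal $=$ isomorphism,'' whereas the paper uses the completed-local-ring isomorphisms directly and only needs \emph{surjectivity} of $p_1$ on the special fibre, for which it suffices to run the good-reduction argument over mixed-characteristic strictly henselian DVRs through a given closed point of $\ES_{1,K}$ (such exist by flatness). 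This is the one spot where your version carries a real extra burden: the valuative criterion a priori involves equal-characteristic-$p$ DVRs as well, for which the generic fibre of the test datum lives over the special fibre of $\ES_{3,K}$ and the $G$-structure on the Tate module (needed to transfer unramifiedness from $T_\ell\mathcal{A}_1$ to $T_\ell\mathcal{A}_2$) is not available for free. You should either invoke the refined valuative criterion restricting to valuation rings whose generic point maps to a generic point of $\ES_{3,K}$ (hence to characteristic $0$), or simply downgrade properness to the paper's surjectivity statement; with that adjustment the argument closes, and your treatment of the $K^\circ$ case and of the finite-birational-normal mechanism are both fine.
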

\begin{proof}
The theorem follows by putting together the following claims which will be proved separately.

\textbf{Claim 1: } the morphism $p_1:\ES_{3,K}\rightarrow \ES_{1,K}$ induces isomorphisms on stalks of closed points in special fibers. By the previous lemma, the assignment $f\mapsto f|_{V_{1,\INT_p}^\vee}$ induces an isomorphism $\widetilde{\ES}_{3,K}\rightarrow p_1^*\widetilde{\ES}_{1,K}$ of $G_{\INT_p}$-torsors over $\ES_{3,K}$, and we have the following commutative diagram.
$$\xymatrix{\ES_{3,K}\ar[d]_{p_1} \ar@{}[dr]|{\Box}& \widetilde{\ES}_{3,K}\ar[r]^{q_3}\ar[l]_{\pi_3}\ar[d]& \mathrm{M}_{G,X}^{\mathrm{loc}}\ar@{=}[d]\\
\ES_{1,K} & \widetilde{\ES}_{1,K} \ar[r]^{q_1}\ar[l]_{\pi_1}& \mathrm{M}_{G,X}^{\mathrm{loc}}
}$$

Let $E^p$ be the maximal unramified extension of $E$, we will base-change to $O_{E^p}$ without changing notations. For $x\in \ES_{3,K}(\overline{\kappa})$ and $y\in \ES_{1,K}(\overline{\kappa})$ with $p_1(x)=y$, we write $R_x$ and $R_y$ for the completion of stalks respectively. For a section $f\in \widetilde{\ES}_{1,K}(R_y)$, we denote by $z\in \mathrm{M}_{G,X}^{\mathrm{loc}})(\overline{\kappa})$ the image of $f_y$ and $R_z$ the completion of stalk. The filtration $f^{-1}(\V^1_1)\subseteq V^\vee_{1,\INTP}\otimes R_y$ induces an isomorphism $R_z\rightarrow R_y$ by Theorem \ref{result P-Z and K-P} (3). Similarly, $p_1^*(f)\in \widetilde{\ES}_{3,K}(R_x)$ induces an isomorphism $R_z\rightarrow R_x$ with the factorization $R_z\rightarrow R_y\stackrel{p_1}{\rightarrow} R_x$. In particular, $p_1$ induces isomorphisms on stalks.

\textbf{Claim 2: }$p_1:\ES_{3,K}\rightarrow \ES_{1,K}$ is surjective on special fibers. An $\overline{\mathbb{F}}_p$-point of $\ES_{1,K}$ factors through an $A$-point of $\ES_{1,K}$ with $A$ a strictly henselian DVR of mixed characteristic. It lifts to $\ES_{1,K_p}:=\varprojlim_{K^p}\ES_{1,K_pK^p}$ as the transition morphisms are \'{e}tale, and induces an $A[1/p]$-point of $\ES_{3,K_p}[1/p]=\ES_{1,K_p}[1/p]$. By the N\'{e}ron-Ogg-Shafarevich criterion, it extends to an $A$-point of $\ES_{3,K_p}$.

To sum up, $p_1:\ES_{3,K}\rightarrow \ES_{1,K}$ is a surjective birational morphism between normal schemes which induces isomorphisms on completions of stalks of closed points, and hence is an isomorphism, same for $p_2$. These statements hold if we replace $K$ by $K^\circ$, as $\ES_{K^\circ}$ is by definition the normalization of the closure of the image of $\Sh_{K^\circ}$ in $\ES_{K}$.
\end{proof}

\begin{remark}\label{remark--indp of hodge typ loc diag}Let $\widetilde{\ES}_{K^\circ}$ be the pull back to $\ES_{K^\circ}$ of $\widetilde{\ES}_{K}$. We actually show, in the above proof, that the diagrams $$\ES_{K}\stackrel{\pi}{\longleftarrow} \widetilde{\ES}_{K}\stackrel{q}{\longrightarrow} \mathrm{M}_{G,X}^{\mathrm{loc}}\text{\ \ \ \ and\ \ \ \ }\ES_{K^\circ}\stackrel{\pi}{\longleftarrow} \widetilde{\ES}_{K^\circ}\stackrel{q}{\longrightarrow} \mathrm{M}_{G,X}^{\mathrm{loc}}$$ are independent of choices of symplectic embeddings.
\end{remark}
\subsection{}\label{setting--mor Hodge type} One can use the previous discussions to show that, under certain conditions, morphisms of Shimura varieties of Hodge type extend to integral models. Let $f:(G,X)\rightarrow (G',X')$ be a morphism of Shimura data of Hodge type. We assume in addition that $G_{\mathbb{Q}_p}\rightarrow G'_{\mathbb{Q}_p}$ extends to a homomorphism $G_{\INTP}\rightarrow G'_{\INTP}$ of group schemes over $\INTP$. For $K=K_pK^p\subseteq G(\mathbb{A}_f)$ and $K'=K'_pK'^p\subseteq G'(\mathbb{A}_f)$ with $f(K)\subseteq K'$, we have a natural morphism $\Sh_K\rightarrow \Sh_{K',E}.$
Noting that $f(K^\circ)\subseteq K'^\circ$ for $K^\circ:=K^\circ_pK^p$ and $K'^\circ:=K'^\circ_pK'^p$, we also have a natural morphism $\Sh_{K^\circ}\rightarrow \Sh_{K'^\circ,E}.$

Choosing symplectic embeddings $$i:(G,X)\rightarrow (\mathrm{GSp}(V,\psi),S^{\pm})\text{\ \ and\ \ }i':(G',X')\rightarrow (\mathrm{GSp}(V',\psi'),S'^{\pm})$$ with $V_\INT\subseteq V$ and $V'_\INT\subseteq V'$ as in \S\ref{para int setting}, we have integral models $\ES_{K}$, $\ES_{K^\circ}$, $\ES_{K'}$, and $\ES_{K'^\circ}$ for $\Sh_{K}$, $\Sh_{K^\circ}$, $\Sh_{K'}$, and $\Sh_{K'^\circ}$ respectively.

\begin{proposition}\label{extn morp on int}
The morphism\footnote{Recall that our conventions are as in (\ref{conven--morp}).} of Shimura varieties $\Sh_{K}\rightarrow \Sh_{K'}$ extends to a morphism of integral models $\ES_{K}\rightarrow \ES_{K'}$. The same statement holds if we change $K$ to $K^\circ$ and $K'$ to $K'^\circ$.
\end{proposition}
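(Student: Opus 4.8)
The plan is to reduce the statement to the independence of the integral model under change of symplectic embedding (Theorem \ref{uniq int model}), by producing a \emph{single} symplectic embedding of $(G,X)$ that already "sees" the target model $\ES_{K'}$. Concretely, I would regard $V'$ as a representation of $G$ through $f$ and form, as in \S\ref{sum of ebd} and \S\ref{comp normal mod}, the sum embedding
$i_3:=i\oplus(i'\circ f):(G,X)\rightarrow (\mathrm{GSp}(V_3,\psi_3),S_3^\pm)$ with $V_3:=V\oplus V'$ and lattice $V_{3,\INT}:=V_\INT\oplus V'_\INT$ (after the character normalization of \S\ref{sum of ebd}, so that $G$ acts on $\psi$ and $\psi'$ through the same character). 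Although $i'\circ f$ by itself need not be a closed immersion when $f$ has a kernel, its first summand $i$ is, so $i_3$ is a closed immersion of groups.

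First I would check that $i_3$ is a \emph{good} embedding in the sense of \S\ref{para int setting}. The condition $V_{3,\INT}\subseteq V_{3,\INT}^\vee$ is inherited from the two summands. For the group-scheme condition, this is the one place the hypothesis enters: since $f_{\mathbb{Q}_p}$ extends to $G_{\INTP}\rightarrow G'_{\INTP}$ and $i'$ is good, $G'_{\INTP}$ is a closed subgroup scheme of $\mathrm{GL}(V'_{\INTP})$, so composition yields $G_{\INTP}\rightarrow \mathrm{GL}(V'_{\INTP})$; together with the closed immersion $G_{\INTP}\hookrightarrow \mathrm{GL}(V_{\INTP})$ (goodness of $i$) this gives a homomorphism $G_{\INTP}\rightarrow \mathrm{GL}(V_{3,\INTP})$ whose first component is a closed immersion, hence which is itself a closed immersion. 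Its image is a flat closed subgroup scheme with generic fibre $G$, so it is exactly the Zariski closure of $G$; thus that closure is $G_{\INTP}$ and $i_3$ is good. Now Theorem \ref{uniq int model} applies and the projection $p_1:\ES_{3,K}\rightarrow \ES_K$ is an isomorphism. Note that the argument there uses only the goodness of $i$ and of $i_3$, so it is harmless that $i'\circ f$ alone is not an embedding.

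Next I would produce the morphism to the target. By the argument of Lemma \ref{i' isom}, on $\ES_{3,K}$ the universal abelian scheme splits as $\mathcal{A}_3=\mathcal{A}_1\times\mathcal{A}_2$, and the factor $(\mathcal{A}_2,\lambda_2,\eta_2)$ — of type $(g',d')$ — defines a morphism $\ES_{3,K}\rightarrow \mathscr{A}_{g',d',L'}$. On the generic fibre this classifying map is exactly the composite $\Sh_K\xrightarrow{f}\Sh_{K'}\xrightarrow{i'}\mathscr{A}_{g',d',L'}$, since $i_2=i'\circ f$; hence it factors through $\ES^-_{K'}$ and we obtain $\ES_{3,K}\rightarrow \ES^-_{K'}$ extending $\Sh_K\rightarrow \Sh_{K'}\rightarrow \ES^-_{K',E}$. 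As $\ES_{3,K}$ is normal and its generic-fibre map factors through $\Sh_{K'}$, the universal property of the normalization $\ES_{K'}\rightarrow \ES^-_{K'}$ yields $\ES_{3,K}\rightarrow \ES_{K'}$. Precomposing with $p_1^{-1}$ gives the desired extension $\ES_K\cong \ES_{3,K}\rightarrow \ES_{K'}$ of $\Sh_K\rightarrow\Sh_{K'}$.

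The $K^\circ$-version follows formally by taking normalizations in $\Sh_{K^\circ}$ and $\Sh_{K'^\circ}$ (using $f(K^\circ)\subseteq K'^\circ$), exactly as in the last paragraph of the proof of Theorem \ref{uniq int model}. I expect the main obstacle to be precisely the verification that $i_3$ is good: everything rests on extending $f$ to $G_{\INTP}\rightarrow G'_{\INTP}$, which is what forces the Zariski closure of $G$ in $\mathrm{GL}(V_{3,\INTP})$ to be $G_{\INTP}$ and thereby makes Theorem \ref{uniq int model} applicable. The remaining work — the splitting of $\mathcal{A}_3$, the matching of polarizations and level structures after the character rescaling of \S\ref{sum of ebd}, and the scheme-theoretic-image bookkeeping feeding the normalization — is routine but must be carried out with care.
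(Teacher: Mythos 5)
Your proposal is correct and follows essentially the same route as the paper: form the sum embedding $i\oplus(i'\circ f)$ into $\mathrm{GSp}(V\oplus V')$, use the hypothesis that $f$ extends to $G_{\INTP}\rightarrow G'_{\INTP}$ to see this is a good embedding, invoke Theorem \ref{uniq int model} to identify the resulting model with $\ES_K$, and then use the splitting argument of Lemma \ref{i' isom} together with the projection to $\mathscr{A}_{g',d'}$ and normality to land in $\ES_{K'}$. Your explicit verifications (that $i'\circ f$ need not be a closed immersion but the first summand saves this, and that Theorem \ref{uniq int model} only uses goodness of $i$ and of the sum embedding) are details the paper leaves implicit, but the argument is the same.
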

\begin{proof}
It suffices to show the statement for $K$ and $K'$. Notations as above, we set $V_1:=V\oplus V'$, and define $\psi_1: V_1\times V_1\rightarrow \mathbb{Q}$ by
\begin{subeqn}\label{symp for morph of hodge}
\psi_1\big((v,v'),(w,w')\big):=\psi(v,w)+\psi'(v',w'),\ \ \forall\ v,w\in V \ \mathrm{and}\ \forall\ w,w'\in V'.
\end{subeqn}
Then as in \S\ref{sum of ebd}, we have a symplectic embedding $i_1:(G,X)\subseteq (\mathrm{GSp}(V_1,\psi_1),S^{\pm}_1)$, inducing a commutative diagram
$$\xymatrix@C=1.3cm{
\Sh_K\ar[r]_{i\times(i'\circ f)}\ar[d]_f \ar@/^0.7pc/[rr]^{i_1}&\mathscr{A}_{g,d,n}\times \mathscr{A}_{g',d',n}\ar[r]\ar[d]_{p_2}&\mathscr{A}_{g_1,d_1,n}\\
\Sh_{K'}\ar[r]_{i'}&\mathscr{A}_{g',d',n}.}$$
Noting that for $V_{1,\INT}:=V_\INT\oplus V'_{\INT}$, we have $G_{\INT_p}\subseteq \mathrm{GL}(V_{1,\mathbb{Z}_p})$, and by Theorem \ref{uniq int model}, $\ES_{K}$ is the normalization of the closure of the image of $\Sh_K$ in $\mathscr{A}_{g_1,d_1,n}$. But by the proof of Lemma \ref{i' isom}, $\ES^0_K$, the normalization of the closure of the image of $\Sh_K$ in $\mathscr{A}_{g,d,n}\times \mathscr{A}_{g',d',n'}$, is isomorphic to $\ES_{K}$, and hence $f$ extend to a morphism of integral models.
\end{proof}
\subsection{} The local model $\mathrm{M}_{G,X}^{\mathrm{loc}}$ is independent of choices of symplectic embeddings, and hence we have a morphism $q_f: \mathrm{M}_{G,X}^{\mathrm{loc}}\rightarrow \mathrm{M}_{G',X'}^{\mathrm{loc}}$. Let $\ES_{K'}\stackrel{\pi_{K'}}{\longleftarrow} \widetilde{\ES}_{K'}\stackrel{q_{K'}}{\longrightarrow} \mathrm{M}_{G',X'}^{\mathrm{loc}}$ be the diagram attached to the embedding $i'$ as in Theorem \ref{result P-Z and K-P}. The morphism $f:\ES_K\rightarrow \ES_{K'}$ obtained as above sits inside the following commutative diagram.
\subsubsection{}\label{functo local mod diag} \ \ \ \ \ \ \ \ \ $\xymatrix@C=1.3cm@R=0.5cm{
 &\widetilde{\ES}_K\ar[dl]_{\pi_K}\ar[dr]^{q_K}\ar[d]   \\
\ES_{K}\ar@{}[ddr] |{\Box} \ar[d]_{f}  & \widetilde{\ES}_K\times^{G_{\INT_p}}G'_{\INT_p}\ar[l]\ar[d]^(0.45){\cong}& \mathrm{M}_{G,X}^{\mathrm{loc}}\ar[d]^{q_f}\\
\ES_{K'} & f^*\widetilde{\ES}_{K'}\ar[ul]\ar[d]\ar[r] & \mathrm{M}_{G',X'}^{\mathrm{loc}}\\
   &    \widetilde{\ES}_{K'} \ar[ur]_{q_{K'}}\ar[ul]^{\pi_{K'}}       }$\\
Here $\widetilde{\ES}_K\rightarrow f^*\widetilde{\ES}_{K'}$ is induced by $\alpha\mapsto \alpha|_{V'^\vee_{\INT_p}}$, where $\alpha$ is a section of $\widetilde{\ES}_K$ constructed via the symplectic embedding $i_1$ (see Remark \ref{remark--indp of hodge typ loc diag}).

\section[A variation: connected Shimura varieties of Hodge type]{A variation: connected Shimura varieties of Hodge type}
\subsection{} A \emph{connected Shimura datum} is a pair $(H,Y)$ where $H$ is a semi-simple algebraic group over $\mathbb{Q}$, and $Y$ is a connected component of $X$, with $(H^{\text{ad}}, X)$ an adjoint Shimura datum. There is a connected Shimura variety $\Sh^+(H,Y)_{\mathbb{C}}$ attached to $(H,Y)$, which is defined over $\overline{\RAT}$. Given a Shimura datum $(G,X)$ with a connected component $X^+\subseteq X$, we get a connected Shimura datum $(G^{\text{der}}, X^+)$. The connected Shimura variety $\Sh^+(G^{\text{der}}, X^+)_{\mathbb{C}}$ coincides with the connected component of $\Sh(G,X)_{\mathbb{C}}$ containing the image of $X^+\times \{\mathrm{id}\}$.

A connected Shimura datum $(H,Y)$ is said to be \emph{of Hodge type}, if there is a Shimura datum of Hodge type $(G,X)$, with $H=G^{\text{der}}$ and $Y$ a  connected component of $X$. The Shimura datum $(G,X)$ will be called an \emph{extension} of $(H,Y)$.

\subsection{}\label{morp of conn data--setting} Let $(H,Y)$ and $(H',Y')$ be two connected Shimura data of Hodge type. We will consider homomorphisms $f:H\rightarrow H'$ such that there exists a morphism of Shimura data $f_4:(G_4,X_4)\rightarrow(G'_4,X'_4)$ sitting inside a commutative diagram $$\xymatrix{
H\ar[r]^{f}\ar[d]^{\alpha}&H'\ar[d]^{\alpha'}\\
G_4\ar[r]^{f_4}&G'_4}$$
satisfying the following conditions:
\begin{enumerate}
\item $\alpha$ is a central isogenies inducing an isomorphism of adjoint Shimura data $(H^{\mathrm{ad}},Y^{\mathrm{ad}})\stackrel{\cong}{\longrightarrow} (G_4^{\mathrm{ad}},X_4^{\mathrm{ad}})$, and same for $\alpha'$;
\item via the above identifications, $Y\subseteq X_4$, $Y'\subseteq X'_4$ and $f_4(Y)\subseteq Y'$.
\end{enumerate}

\subsection{}\label{morp of conn to symp ebd} Notations and assumptions as above, we choose a Hodge type extension $(G,X)$ (resp. $(G',X')$) for $(H,Y)$ (resp. $(H',Y')$), together with a symplectic embedding $i:(G,X)\hookrightarrow (\mathrm{GSp}(V,\psi), S^{\pm})$ (resp. $i':(G',X')\hookrightarrow (\mathrm{GSp}(V',\psi'), S'^{\pm})$). We will explain how to ``compare'' these two embeddings.

Let $Z$ be the center of $G$, we set $Z^{\mathrm{Sp}}$ (resp. $G^{\mathrm{Sp}}$) to be the \emph{identity component} of $Z\cap \mathrm{Sp}(V,\psi)$ (resp. $G\cap \mathrm{Sp}(V,\psi)$). The multiplication $H\times Z^{\mathrm{Sp}}\rightarrow G^{\mathrm{Sp}}$ induces an identity $G^{\mathrm{Sp}}=H\cdot Z^{\mathrm{Sp}}$. Let $\mathbb{S}^1\subseteq \mathbb{S}$ be the unit circle, for $h:\mathbb{S}\rightarrow G_{\mathbb{R}}$ in $Y$, we identify $h$ with its restriction to $\mathbb{S}^1$ (still denoted by $h$), and its image in $X_4$ (resp. $Y^{\mathrm{ad}}$) will be denoted by $h_4$ (resp. $\overline{h}$). Now $h:\mathbb{S}^1\rightarrow G_{\mathbb{R}}$ factors through $G^{\mathrm{Sp}}_{\mathbb{R}}$, and it decomposes as $h=\widetilde{h}_4\cdot h_Z$, where $\widetilde{h}_4:\mathbb{G}_{m,\mathbb{C}}\rightarrow H_{\mathbb{C}}$ is a fractional cocharacter lifting $h_4$, and $h_Z$ is a fractional cocharacter of $Z^{\mathrm{Sp}}_{\mathbb{C}}$.

Using similar notations, we have $Z'^{\mathrm{Sp}}$ and $G'^{\mathrm{Sp}}$ with $G'^{\mathrm{Sp}}=H'\cdot Z'^{\mathrm{Sp}}$; and for $h':\mathbb{S}^1\rightarrow G'^{\mathrm{Sp}}_{\mathbb{R}}$ in $Y'$ such that $h'_4=f\circ h_4$, we have $h'=(f\circ\widetilde{h}_4)\cdot h'_{Z'}$, where $h'_{Z'}$ is a fractional cocharacter of $Z'^{\mathrm{Sp}}_{\mathbb{C}}$.



Let $V_1=V\oplus V'$, and $G_1$ be the subgroup of $\mathrm{GL}(V_1)$ generated by $Z^{\mathrm{Sp}}\times Z'^{\mathrm{Sp}}$, the graph of $f:H\rightarrow H'$ (as a subgroup of $\mathrm{GL}(V)\times \mathrm{GL}(V')$) and the diagonal torus $\mathbb{G}_m\subseteq \mathrm{GL}(V_1)$. It is by construction that $G_1\subseteq G\times G'$ and $G_1^{\mathrm{der}}=H$. Moreover, by our discussions above, the homomorphism $h_1=(h,h'):\mathbb{S}\rightarrow G_{\mathbb{R}}\times G'_\mathbb{R}$ factors through $G_{1,\mathbb{R}}$. Let $X_1$ be the $G_1(\mathbb{R})$-orbit of $h_1$, then $(G_1,X_1)$ is a Hodge type extension of $(H,Y)$ with a symplectic embedding $i_1:(G_1,X_1)\hookrightarrow (\mathrm{GSp}(V_1,\psi_1), S_1^{\pm})$, and it is equipped with morphisms $(G,X)\leftarrow (G_1,X_1)\rightarrow (G',X')$.

\subsection{}\label{extend morp gp sch} Let $H_{\INT_p}$ (resp. $H'_{\INT_p}$) be the Bruhat-Tits group scheme attached to a point $\mathfrak{b}$ (resp. $\mathfrak{b}'$) in the Bruhat-Tits building $\mathcal {B}(H,\mathbb{Q}_p)$ (resp. $\mathcal {B}(H',\mathbb{Q}_p)$).
We assume in addition that
\subsubsection{}\label{assump--ext derived} \ \ \ \ \ $f_{\RAT_p}:H_{\RAT_p}\rightarrow H'_{\RAT_p}$ extends to a homomorphism $H_{\INT_p}\rightarrow H'_{\INT_p}$.\\
Notations as above and assume that $i,i'$ are as in \S\ref{para int setting} with lattices $V_{\INT_p}\subseteq V$ and $V'_{\INT_p}\subseteq V'$ also as in loc.cit. In particular, $G_{\INT_p}$ (resp. $G'_{\INT_p}$), the closure of $G_{\RAT_p}$ (resp. $G'_{\RAT_p}$) in $\mathrm{GL}(V_{\INT_p})$ (resp. $\mathrm{GL}(V'_{\INT_p})$), is the Bruhat-Tits group scheme attached to $\mathfrak{b}$ (resp. $\mathfrak{b}'$). Noting that $Z_{\RAT_p}^{\mathrm{Sp}}$ (resp. $Z'^{\mathrm{Sp}}_{\RAT_p}$) is connected by definition, its closure $Z^{\mathrm{Sp}}_{\INT_p}$ (resp. $Z'^{\mathrm{Sp}}_{\INT_p}$) in $G_{\INT_p}$ (resp. $G'_{\INT_p}$) is smooth by \cite[Remark 1.1.8]{Paroh}.

Now we consider the commutative diagram
$$\xymatrix@C=1.3cm{
G_{1,\RAT_p}\ar[r]\ar[d]& \mathrm{GL}(V_{\INT_p})\times\mathrm{GL}(V'_{\INT_p})\ar@{^{(}->}[r]\ar[d]^{p_1}& \mathrm{GL}(V_{1,\INT_p})\\
G_{\RAT_p}\ar[r]& \mathrm{GL}(V_{\INT_p}).}$$
Let $G_{1,\INT_p}$ be the closure of $G_{1,\RAT_p}$ in $\mathrm{GL}(V_{1,\INT_p})$. By our assumption (\ref{assump--ext derived}), the closure of $H_{\RAT_p}$ in $\mathrm{GL}(V_{1,\INT_p})$ is $H_{\INT_p}$, and hence $G_{1,\INT_p}=H_{\INT_p}\cdot (Z^{\mathrm{Sp}}_{\INT_p}\times Z'^{\mathrm{Sp}}_{\INT_p})\cdot \mathbb{G}_m$ is necessarily smooth. In particular, it is the Bruhat-Tits group scheme attached to $\mathfrak{b}$ via the inclusion $\mathcal{B}(H,\mathbb{Q}_p)\subseteq \mathcal{B}(G_1,\mathbb{Q}_p)$. The projection $p_1$ induces homomorphisms $G_{1,\INT_p}\rightarrow G_{\INT_p}$ and $G^\circ_{1,\INT_p}\rightarrow G^\circ_{\INT_p}$ extending $G_{1,\RAT_p}\rightarrow G_{\RAT_p}$. As we have remarked at the beginning of \S\ref{sec--int mod}, they are defined over $\INT_{(p)}$. Similarly, $G_1\rightarrow G'$ extends to homomorphisms $G_{1,\INTP}\rightarrow G'_{\INTP}$ and $G^\circ_{1,\INTP}\rightarrow G'^\circ_{\INTP}$.



\subsection{} We will work with connected components of $\Sh_{K_p^\circ}(G,X)_{\mathbb{C}}$. The connected component containing the image of $X^+\times \{\mathrm{id}\}$ is $\Sh^+_{K_p^\circ}(G,X)_{\mathbb{C}}:=\varprojlim_{K^p} \Gamma_{K^p}\backslash X^+$, where $\Gamma_{K^p}$ is the image of $G(\RAT)_+\cap K_p^\circ K^p$ in $G^{\mathrm{ad}}(\RAT)$. By \cite[Corollary 4.3.9]{Paroh}, it is defined over $\mathrm{E}^p$, the maximal extension of $\mathrm{E}$ which is unramified over primes dividing $p$.

Let $\Gamma_p$ be the intersection of the $\Gamma_{K^p}$s. Then $$\Gamma_p=\bigcap_L\mathrm{Im}\big((G^{\mathrm{der}\circ}(\INTP)_+\cap L)\rightarrow G^{\mathrm{ad}}(\RAT)\big)$$ where $L$ runs through $p$-congruence subgroups of $G^{\mathrm{der}}(\RAT)$, and hence depends only on $G^{\mathrm{der}\circ}_{\INTP}$. The $\mathrm{E}^p$-variety $\Sh^+_{K_p^\circ}(G,X)_{\mathrm{E}^p}$, or simply $\Sh^+_{K_p^\circ, \mathrm{E}^p}$, depends only on $(G^{\mathrm{der}\circ}_{\INTP},X^+)$, and hence will be denoted by $\Sh_{\Gamma_p}(G^{\mathrm{der}}, X^+)_{\mathrm{E}^p}$, or simply $\Sh^+_{\Gamma_p,\mathrm{E}^p}$ sometimes. We will always pass to $E^p$, and denoted the variety by $\Sh^+_{K_p^\circ}$ or $\Sh^+_{\Gamma_p}$, if there is no risk of confusion.



Let $(G',X')$ be another Hodge type Shimura datum with reflex field $\mathrm{E}'$, and $f:H=G^{\mathrm{der}}\rightarrow H'=G'^{\mathrm{der}}$ be a homomorphism as in \S\ref{morp of conn data--setting}. We assume in addition that condition (\ref{assump--ext derived}) holds. By the proof of \cite[Corollary 5.4]{tradeshi}, the morphism of complex varieties $f:\Sh^+_{\Gamma_p,\mathbb{C}}\rightarrow \Sh^+_{\Gamma'_p, \mathbb{C}}$ is defined over $\mathrm{E}^p\cdot \mathrm{E}'^p$. As what we used to do, we will fix a prime of $\mathrm{E}^p\cdot \mathrm{E}'^p$ over $v$ and pass to completion. It will be denoted by $\mathtt{E}$ with ring of integers $\mathtt{O}$.

\begin{theorem}\label{result for conn shv}Notations and assumptions as above,
\begin{enumerate}
\item the integral model $\ES_{\Gamma_p}^+$ is independent of choices of extensions to Shimura data of Hodge type;
\item 
$f:\Sh^+_{\Gamma_p, \mathtt{E}}\rightarrow \Sh^+_{\Gamma'_p,\mathtt{E}}$ extends to a morphism $\ES^+_{\Gamma_p,\mathtt{O}}\rightarrow \ES^+_{\Gamma'_p,\mathtt{O}}$.
\end{enumerate}
\end{theorem}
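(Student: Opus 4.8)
The plan is to reduce everything to the independence of symplectic embeddings already proved in Section~3, by realising any two Hodge type extensions---and, for (2), the morphism $f$ itself---inside a single dominating datum $(G_1,X_1)$ via the construction of \S\ref{morp of conn to symp ebd}.

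For (1), let $(G,X)$ and $(G',X')$ be two Hodge type extensions of $(H,Y)$, each equipped with a symplectic embedding as in \S\ref{para int setting}. Applying \S\ref{morp of conn to symp ebd} with $f=\mathrm{id}_H$ produces a Hodge type extension $(G_1,X_1)$ of $(H,Y)$ with $G_1^{\mathrm{der}}=H$ and projections $(G,X)\leftarrow(G_1,X_1)\rightarrow(G',X')$, both restricting to the identity on $H$. By \S\ref{extend morp gp sch} the projection $G_1\rightarrow G$ extends to a homomorphism of Bruhat--Tits group schemes $G_{1,\INTP}\rightarrow G_{\INTP}$, so after choosing compatible symplectic data Proposition~\ref{extn morp on int} gives a morphism of integral models $\ES_{K_1^\circ}(G_1)\rightarrow\ES_{K^\circ}(G)$. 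Since the projection is the identity on derived groups it induces the identity on the connected Shimura varieties $\Sh^+_{\Gamma_p}$; restricting to the relevant connected components and passing to the limit over $K^p$ therefore yields a birational morphism $p\colon\ES^+_{\Gamma_p}(G_1)\rightarrow\ES^+_{\Gamma_p}(G)$ of normal $\mathtt{O}$-schemes.

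The heart of the matter is to show $p$ is an isomorphism, which I carry out exactly as in the proof of Theorem~\ref{uniq int model}. The projection $G_1\rightarrow G$ is a central torus extension, so the induced map on flag varieties is an isomorphism; consequently the morphism of local models $q_f\colon\mathrm{M}^{\mathrm{loc}}_{G_1,X_1}\stackrel{\cong}{\longrightarrow}\mathrm{M}^{\mathrm{loc}}_{G,X}$ of (\ref{functo local mod diag}) is an isomorphism, compatible with the two local model diagrams of Remark~\ref{remark--indp of hodge typ loc diag}. Using the smoothness of $q$ in Theorem~\ref{result P-Z and K-P}(3), this shows that $p$ induces isomorphisms on the completions of the local rings at closed points of the special fibre, exactly as in Claim~1 of loc.\ cit.; surjectivity of $p$ on special fibres follows from the N\'eron--Ogg--Shafarevich criterion as in Claim~2. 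A surjective birational morphism of normal schemes that is an isomorphism on completed stalks is an isomorphism, so $\ES^+_{\Gamma_p}(G_1)\cong\ES^+_{\Gamma_p}(G)$, and symmetrically $\ES^+_{\Gamma_p}(G_1)\cong\ES^+_{\Gamma_p}(G')$; this proves (1).

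For (2), I rerun \S\ref{morp of conn to symp ebd} with the given $f\colon H\rightarrow H'$, obtaining $(G_1,X_1)$ with $G_1\rightarrow G$ inducing $\mathrm{id}_H$ and $G_1\rightarrow G'$ inducing $f$. Assumption~(\ref{assump--ext derived}) together with \S\ref{extend morp gp sch} guarantees that $G_1\rightarrow G'$ extends to Bruhat--Tits group schemes, so Proposition~\ref{extn morp on int} produces a morphism $\ES^+_{\Gamma_p}(G_1)\rightarrow\ES^+_{\Gamma'_p}(G')$ extending $f$ on connected Shimura varieties over $\mathtt{E}$. Precomposing with the inverse of the isomorphism $\ES^+_{\Gamma_p}(G)\stackrel{\cong}{\longrightarrow}\ES^+_{\Gamma_p}(G_1)$ from (1) gives the desired extension $\ES^+_{\Gamma_p,\mathtt{O}}\rightarrow\ES^+_{\Gamma'_p,\mathtt{O}}$. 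The main obstacle throughout is the isomorphism claim in (1): unlike in Theorem~\ref{uniq int model}, the map $G_1\rightarrow G$ is not an isogeny, so one cannot simply quote that result and must instead verify that the central torus extension leaves the local model diagram unchanged on the connected component---this is precisely where the fact that $\Sh^+_{\Gamma_p}$, and hence its integral model, is governed by the common derived datum $(G^{\mathrm{der}\circ}_{\INTP},X^+)$ enters.
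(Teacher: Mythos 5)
Your proposal is correct and follows essentially the same route as the paper: construct the dominating extension $(G_1,X_1)$ via \S\ref{morp of conn to symp ebd}, extend the projections to integral models using \S\ref{extend morp gp sch} and Proposition \ref{extn morp on int}, deduce the isomorphism on connected components from the compatibility with the (identified) local model diagrams together with the stalk and surjectivity arguments of Theorem \ref{uniq int model}, and obtain (2) by composing with the isomorphism from (1). The only cosmetic difference is that you justify $\mathrm{M}^{\mathrm{loc}}_{G_1,X_1}\cong\mathrm{M}^{\mathrm{loc}}_{G,X}$ directly via the central torus extension, where the paper cites \cite[Proposition 2.2.7]{Paroh}.
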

\begin{proof}
Before starting the proof, we would like to point out that the proof for (1) and (2) follows slightly different notation systems---for (1), we follow \S\ref{sum of ebd}; while for (2), we switch to \S\ref{setting--mor Hodge type}.

We prove (1) first. Let $$i_1:(G_1,X_1)\rightarrow (\mathrm{GSp}(V_1,\psi_1),S^{\pm}_1)\text{ \ \ and \ \ }i_2:(G_2,X_2)\rightarrow (\mathrm{GSp}(V_2,\psi_2),S^{\pm}_2)$$ be extensions of $(H,Y)$ with symplectic embedding; and  $V_{t,\INT}\subseteq V_t$, $t=1,2$, be lattices as in \S\ref{para int setting}. We have then $(\mathrm{GSp}(V_3,\psi_3),S^{\pm}_3)$ with $V_{3,\INT}:=V_{1,\INT}\oplus V_{2,\INT}\subseteq V_3$ as at the beginning of  \S\ref{sum of ebd}.

By \S\ref{morp of conn to symp ebd}, we have a new extension $(G_3,X_3)$ of $(H,Y)$ with a symplectic embedding $(G_3,X_3)\rightarrow (\mathrm{GSp}(V_3,\psi_3),S^{\pm}_3)$. For $t=1,2$, we have natural morphisms $p_t:(G_3,X_3)\rightarrow (G_t,X_t)$, which induce, using similar notations as in \S\ref{conventions}, morphisms of Shimura varieties $\Sh_{K_{3,p}}\rightarrow \Sh_{K_{t,p}}$.

Let $E$ be the $p$-adic reflex field of $(G_3,X_3)$\footnote{Of course, one can also use the $p$-adic reflex field of $(G_1,X_1)$ or $(G_2,X_2)$} with ring of integers $O$. By \S\ref{extend morp gp sch} (take $f=\mathrm{id}_H$, $V=V_1$, and $V'=V_2$), the homomorphisms $G_{3}\rightarrow G_{t}$ extend to homomorphisms of group schemes $G_{3,\INTP}\rightarrow G_{t,\INTP}$, and hence by Proposition \ref{extn morp on int}, the morphisms $\Sh_{K_{3,p},E^p}\rightarrow \Sh_{K_{t,p},E^p}$ extends to $\ES_{K_{3,p},O^p}\rightarrow \ES_{K_{t,p},O^p}$. Restricting to components containing the image of $X^+\times\{\mathrm{id}\}$, we have morphisms $\ES^+_{K_{3,p},O^p}\rightarrow \ES^+_{K_{t,p},O^p}$ which are generically isomorphisms.

We claim that for $t=1,2$, the morphisms $\ES^+_{K_{3,p},O^p}\rightarrow \ES^+_{K_{t,p},O^p}$ are isomorphisms, and hence proves (1). By \cite[Proposition 2.2.7]{Paroh}, we have natural isomorphisms $\mathrm{M}_{G_3,X_3}^{\mathrm{loc}}\rightarrow \mathrm{M}_{G_t,X_t}^{\mathrm{loc}}$; and by (\ref{functo local mod diag}), we have the following commutative diagram.

\subsubsection{}\label{diag--conn loc mod diag} \ \ \ \ \ $\xymatrix@C=1.8cm@R=0.7cm{
\ES^+_{K_{3,p},O^p}\ar@{}[ddr] |{\Box} \ar[d]_{f}  & \widetilde{\ES}^+_{K_{3,p},O^p}\ar[d]\ar[l]_{\pi_K}\ar[r]^{q_K}& \mathrm{M}_{G_3,X_3}^{\mathrm{loc}}\ar[d]^{\cong}\\
\ES^+_{K_{t,p},O^p} & \widetilde{\ES}^+_{K_{3,p},O^p}\times^{G_{3,\INT_p}}G_{t,\INT_p}\ar[ul]\ar[d]\ar[r] & \mathrm{M}_{G_t,X_t}^{\mathrm{loc}}\\
   &    \widetilde{\ES}^+_{K_{t,p},O^p} \ar[ur]_{q_{K'}}\ar[ul]^{\pi_{K'}}       }$\\
In particular, $\ES^+_{K_{3,p},O^p}\rightarrow \ES^+_{K_{t,p},O^p}$ induces an isomorphism of complete local rings at closed points. But then by the same argument as in the proof of Theorem \ref{uniq int model}, it is an isomorphism.

Now we come to (2). The proof is essentially a variation of that of Proposition \ref{extn morp on int}. Notations as in \S\ref{morp of conn to symp ebd}, let $V_{\INT}\subseteq V$ and $V'_{\INT}\subseteq V'$ be lattices as in \S\ref{para int setting}. For $(\mathrm{GSp}(V_1,\psi_1),S^{\pm}_1)$ with $V_{1,\INT}:=V_{\INT}\oplus V'_{\INT}\subseteq V_1$, we have, by \S\ref{morp of conn to symp ebd}, a new extension $(G_1,X_1)$ of $(H,Y)$ with a symplectic embedding $(G_1,X_1)\rightarrow (\mathrm{GSp}(V_1,\psi_1),S^{\pm}_1)$. The natural morphism $(G_1,X_1)\rightarrow (G',X')$ induces, using similar notations as in \S\ref{conventions}, morphisms of Shimura variety $\Sh_{K_{1,p}}\rightarrow \Sh_{K'_{p}}$. We remark that $E'\subseteq E_1\supseteq E$.

By \S\ref{extend morp gp sch} again, the homomorphism $G_{1}\rightarrow G'$ extends to a homomorphism of group schemes $G_{1,\INTP}\rightarrow G'_{\INTP}$, and hence by Proposition \ref{extn morp on int}, the morphism of Shimura varieties $\Sh_{K_{1,p},E_1^p}\rightarrow \Sh_{K'_{p},E_1^p}$ extends to a morphism $\ES_{K_{1,p},O_1^p}\rightarrow \ES_{K'_{p},O_1^p}$. The component containing the image of $X^+\times\{\mathrm{id}_G\}$ is mapped to that containing $X'^+\times\{\mathrm{id}_{G'}\}$, and hence we have the induced morphism $\ES^+_{K_{1,p},O_1^p}\rightarrow \ES^+_{K'_{p},O_1^p}$. But by (1), we have a natural isomorphism $\ES^+_{K_{p},O_1^p}\cong \ES^+_{K_{1,p},O_1^p}$, and hence a morphism $\ES^+_{K_{p},O_1^p}\rightarrow \ES^+_{K'_{p},O_1^p}$. It is necessarily defined over $\mathtt{O}$, as both the integral models are defined over $\mathtt{O}$, and the morphism on generic fibers is defined over $\mathtt{E}$.
\end{proof}
\section[ab type]{Compatibility for parahoric models of abelian type}

Let $(G',X')$ be of Hodge type such that condition (\ref{conditon--Hodge type}) holds. Recall that as in (\ref{prob--Hodge to Hodge}), there are, a priori, two different ways to construct $\ES_{K'^\circ}$: one can construct $\ES_{K'^\circ}$ using \S\ref{para int setting} directly; or one could view it as a Shimura datum of abelian type, and follow the constructions in Theorem \ref{Thm--constuct int ab type}, by first choosing a Shimura datum $(G,X)$ as in Lemma \ref{lemma--choose of Hodge cover}, and then constructing the integral of $\Sh_{K'^\circ}$ via the integral model $\ES_{K^\circ}$ by Theorem \ref{Thm--constuct int ab type}.
\begin{theorem}\label{Thm--indep ab type}The above two constructions of $\ES_{K'^\circ}(G',X')$ coincide.
\end{theorem}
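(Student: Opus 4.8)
The plan is to show that the two a priori different integral models of $\Sh_{K'^\circ}(G',X')$ agree by exhibiting them both as quotients of the \emph{same} connected integral model, and then invoking the independence results already established. Concretely, $(G',X')$ is of Hodge type, so I may apply the construction of \S\ref{para int setting} directly to obtain $\ES_{K'^\circ}$, and I may also regard $(G',X')$ as a Shimura datum of abelian type (taking $(G_4,X_4)=(G',X')$) and choose an auxiliary Hodge-type cover $(G,X)$ as in Lemma \ref{lemma--choose of Hodge cover}, inducing an isomorphism of adjoint data $(G^{\mathrm{ad}},X^{\mathrm{ad}})\cong(G'^{\mathrm{ad}},X'^{\mathrm{ad}})$ together with a central isogeny $G^{\mathrm{der}}\to G'^{\mathrm{der}}$. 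The second construction produces $\ES_{K'^\circ}$ via the quotient expression of Theorem \ref{Thm--constuct int ab type}, built from the connected component $\ES_{K_p^\circ}(G,X)^+$ and the action of $\mathscr{A}(G_{\INTP})^\circ$. The strategy is to compare both sides through their connected components.

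First I would reduce everything to connected Shimura varieties. Since both constructions agree with $\Sh_{K'^\circ}(G',X')$ on generic fibers, and since each of the two integral models decomposes (after passing to $E^p$) into copies of a normalized connected piece glued along the $\mathscr{A}$-group actions, it suffices to identify the relevant connected integral models and check that the two gluing recipes coincide. For the \emph{direct} Hodge-type construction, the connected component is $\ES^+_{K'^\circ_p}(G',X')$, whose associated connected model depends, by Theorem \ref{result for conn shv}(1), only on $(G'^{\mathrm{der}\circ}_{\INTP}, X'^+)$. For the \emph{abelian-type} construction, Theorem \ref{Thm--constuct int ab type} builds $\ES_{K'^\circ_p}(G',X')$ out of $\ES^+_{K_p^\circ}(G,X)$; but the connected model $\ES^+_{\Gamma_p}$ attached to $(G,X)$ depends only on $(G^{\mathrm{der}\circ}_{\INTP}, X^+)$, again by Theorem \ref{result for conn shv}(1). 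The central isogeny $G^{\mathrm{der}}\to G'^{\mathrm{der}}$ and the identification of adjoint data give a comparison between $(G^{\mathrm{der}\circ}_{\INTP},X^+)$ and $(G'^{\mathrm{der}\circ}_{\INTP},X'^+)$, and I would use this—together with the independence of the local-model diagram in Remark \ref{remark--indp of hodge typ loc diag}—to identify the two connected integral models canonically.

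Having matched the connected pieces, the remaining task is to verify that the two descriptions reassemble the full integral model in the same way, i.e. that the $\mathscr{A}(G'_{\INTP})$-action used in the direct construction agrees with the action through $\mathscr{A}(G_{\INTP})$ appearing in the abelian-type quotient. Here I would use that, as recalled in \S\ref{setting--ab type}, the completed groups $\mathscr{A}(G_{\INTP})^\circ$ and $\mathscr{A}(G'_{\INTP})^\circ$ each depend only on the respective derived parahoric group scheme, and that the canonical identification of Shimura varieties from \cite[Lemma 4.6.13]{Paroh} is equivariant for these actions. Both quotients are normalizations in the same generic fiber, so once the actions are matched the two integral models are canonically isomorphic.

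The main obstacle I expect is precisely the bookkeeping at the level of the $\mathscr{A}$-groups and the index set $\mathrm{J}$: when $(G',X')$ is viewed as abelian type with $G_4=G'$, one must check that the auxiliary cover $(G,X)$ genuinely recovers the \emph{direct} Hodge-type model and not some twist, which amounts to verifying that the connected component chosen in the abelian-type construction and its $G'(\mathbb{A}_f^p)$-translates exhaust $\Sh_{K'^\circ}(G',X')$ compatibly with the Hecke action extended in Theorem \ref{Thm--constuct int ab type}(1). In other words, the subtlety is not in identifying a single connected component—that follows cleanly from Theorem \ref{result for conn shv}(1)—but in confirming that the gluing data (the class set $\mathscr{A}(G'^\circ)^\circ\backslash\mathscr{A}(G')/K'^\circ_p$ and the $|\mathrm{J}|$ copies) are the same on both sides, so that taking normalizations yields identical global schemes.
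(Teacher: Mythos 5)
Your overall architecture (compare the two models through their connected components, then match the gluing data) is the right one, but there is a genuine gap at the central step. You assert that Theorem \ref{result for conn shv}(1), together with the central isogeny $G^{\mathrm{der}}\to G'^{\mathrm{der}}$ and the identification of adjoint data, lets you ``identify the two connected integral models canonically.'' That theorem only says the connected model attached to a \emph{fixed} connected Shimura datum $(H,Y)$ is independent of the choice of Hodge-type extension; it says nothing about comparing the connected models for $(H,Y)$ and $(H',Y')$ when $H\to H'$ is a nontrivial central isogeny. In fact the two connected models are \emph{not} canonically isomorphic: the paper's proof shows instead that $\ES^+_{\Gamma'_p,\mathtt{O}}=\ES^+_{\Gamma_p,\mathtt{O}}/\Delta$ with $\Delta=\ker(\mathscr{A}(G_{\INTP})^\circ\to \mathscr{A}(G'_{\INTP}))$, and establishing this identity is the heart of the argument. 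It requires (i) extending $\Sh^+_{\Gamma_p}\to\Sh^+_{\Gamma'_p}$ to integral models via Theorem \ref{result for conn shv}(2), (ii) proving the extended morphism is \'etale by comparing the two local-model diagrams, (iii) showing the induced map $\ES^+_{\Gamma_p,\mathtt{O}}/\Delta\to\ES^+_{\Gamma'_p,\mathtt{O}}$ is an open immersion via complete local rings, and (iv) surjectivity on special fibers, which is not formal and is quoted from \cite[Theorem 4.6.23]{Paroh}. Your proposal, by declaring the connected pieces identified ``cleanly,'' skips exactly the step where the argument could fail.

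Two smaller points. First, you do not address the reduction to the case where the chosen connected components satisfy $Y=Y'$ inside $X^{\mathrm{ad}}$; the paper handles the general case by real approximation, producing $g\in G^{\mathrm{ad}}(\RAT)$ with ${}^gY\subseteq Y'$ and transporting the integral models by Proposition \ref{extn morp on int}. Second, your final paragraph correctly flags the bookkeeping with $\mathscr{A}(G'_{\INTP})$, $\mathscr{A}(G_{\INTP})^\circ$ and $\mathrm{J}$, but to carry it out one also needs that the $G^{\mathrm{ad}\circ}(\INTP)^+$-action on $\Sh_{K'^\circ_p}$ extends to the \emph{directly constructed} $\ES_{K'^\circ_p}$ (again via Proposition \ref{extn morp on int}); that extension is what allows one to present the direct model itself as a quotient $\big[[\ES^+_{\Gamma'_p,\mathtt{O}}\times \mathscr{A}(G'_{\INTP})]/(\mathscr{A}(G_{\INTP})^\circ/\Delta)\big]^{|\mathrm{J}|}$ and complete the comparison.
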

\begin{proof}
Let $(H,Y)\subseteq (G,X)$ and $(H',Y')\subseteq (G',X')$ be connected Shimura data of Hodge type. We first prove the statement under the additional assumption that $Y=Y'$ (as subsets of $X^{\mathrm{ad}}$). By the construction of $H$, $H\twoheadrightarrow H'$ is a central isogeny, and it extends to a homomorphism $H'_{\INT_p}\rightarrow H_{\INT_p}$. By Theorem \ref{result for conn shv} (2) (and also using the notations there), the morphism $\Sh^+_{\Gamma_p, \mathtt{E}}\rightarrow \Sh^+_{\Gamma'_p,\mathtt{E}}$ extends to a morphism $\ES^+_{\Gamma_p,\mathtt{O}}\rightarrow \ES^+_{\Gamma'_p,\mathtt{O}}$.

Notations as in the proof of Theorem \ref{result for conn shv} (2), we have a symplectic embedding $(G_1,X_1)\hookrightarrow (\mathrm{GSp}(V_1,\psi_1), S^\pm)$ whose homomorphism of groups extends to a closed immersion $G_{1,\INT_p}\hookrightarrow \mathrm{GL}(V_{1,\INT_p})$. Let $\widetilde{\ES}^+_{K_{p},\mathtt{O}}$ be the attached $G_{1,\INT_p}$-torsor on $\ES^+_{K_{p},\mathtt{O}}$. Then as in the proof of Theorem \ref{result for conn shv} (1), we have a commutative diagram
\subsubsection{} \ \ \ \ \ $\xymatrix@C=1.8cm@R=0.7cm{
\ES^+_{K_{p},\mathtt{O}}\ar@{}[ddr] |{\Box} \ar[d]_{f}  & \widetilde{\ES}^+_{K_{p},\mathtt{O}}\ar[d]\ar[l]_{\pi_K}\ar[r]^{q_K}& \mathrm{M}_{G,X}^{\mathrm{loc}}\ar[d]^{\cong}\\
\ES^+_{K'_{p},\mathtt{O}} & \widetilde{\ES}^+_{K_{p},\mathtt{O}}\times^{G_{1,\INT_p}}G'_{\INT_p}\ar[ul]\ar[d]\ar[r] & \mathrm{M}_{G',X'}^{\mathrm{loc}}\\
   &    \widetilde{\ES}^+_{K'_{p},\mathtt{O}} \ar[ur]_{q_{K'}}\ar[ul]^{\pi_{K'}}       }$\\
which implies that the morphism $\ES^+_{\Gamma_p,\mathtt{O}}\rightarrow \ES^+_{\Gamma'_p,\mathtt{O}}$ is \'{e}tale.

We claim that $\ES^+_{\Gamma'_p,\mathtt{O}}=\ES^+_{\Gamma_p,\mathtt{O}}/\Delta$, where $\Delta:=\mathrm{ker}(\mathscr{A}(G_{\INTP})^\circ\rightarrow \mathscr{A}(G'_{\INTP}))$. The morphism $\ES^+_{\Gamma_p,\mathtt{O}}\rightarrow \ES^+_{\Gamma'_p,\mathtt{O}}$ factors through $\ES^+_{\Gamma_p,\mathtt{O}}/\Delta$, as $\Delta$ acts trivially on $\Sh^+_{\Gamma'_p,\mathtt{E}}$ and hence trivially on $\ES^+_{\Gamma'_p,\mathtt{O}}$. The morphism $\ES^+_{\Gamma_p,\mathtt{O}}/\Delta\rightarrow \ES^+_{\Gamma'_p,\mathtt{O}}$ is an isomorphism on generic fibers, and induces isomorphisms of complete local rings at closed points in the special fibers. This implies that it is an open immersion. But by \cite[Theorem 4.6.23]{Paroh}, the induced morphism on special fibers is surjective, and hence is an isomorphism.

Let $\ES'_{K'^\circ_p}$ be the integral model of $\Sh_{K'^\circ_p}$ induced by $\ES_{K^\circ}$, i.e. $$\ES'_{K'^\circ_p}:=\big[[\ES^+_{\Gamma_p,\mathtt{O}}\times \mathscr{A}(G'_{\INTP})]/\mathscr{A}(G_{\INTP})^\circ\big]^{|\mathrm{J}|}.$$ We will compare $\ES'_{K'^\circ_p}$ and $\ES_{K'^\circ_p}$. By Proposition \ref{extn morp on int}, the $G^{\mathrm{ad}\circ}(\INTP)^+$-action on $\Sh_{K'^\circ_p}$ extends to $\ES_{K'^\circ_p}$. But then the $\mathscr{A}(G'_{\INTP})$-action, and hence the $\mathscr{A}(G_{\INTP})^\circ$-action, on $\Sh_{K'^\circ_p,\mathtt{E}}$ extends to $\ES_{K'^\circ_p, \mathtt{O}}$. The $\mathscr{A}(G_{\INTP})^\circ$-action leaves $\ES_{K'^\circ_p, \mathtt{O}}$ stable, so we have a morphism $\big[[\ES^+_{\Gamma'_p,\mathtt{O}}\times \mathscr{A}(G'_{\INTP})]/(\mathscr{A}(G_{\INTP})^\circ/\Delta)\big]^{|\mathrm{J}|}\rightarrow \ES_{K'^\circ_p}$ which is necessarily an isomorphism. The morphism $\ES^+_{\Gamma_p,\mathtt{O}}\rightarrow \ES^+_{\Gamma'_p,\mathtt{O}}$ then induces a morphism  $[\ES^+_{\Gamma_p,\mathtt{O}}\times\mathscr{A}(G'_{\INTP})]^{|\mathrm{J}|} \longrightarrow [\ES^+_{\Gamma'_p,\mathtt{O}}\times \mathscr{A}(G'_{\INTP})]^{|\mathrm{J}|}$, and hence an isomorphism $\ES'_{K'^\circ_p, \mathtt{O}}\rightarrow \ES_{K'^\circ_p,\mathtt{O}}$ which descends to $O_{E'}$.

In general, by the real approximation, there is a $g\in G^{\mathrm{ad}}(\RAT)$ such that ${}^gY\subseteq Y'$. Here we write ${}^g(-)$ for the action of induced by conjugation. It also induces an isomorphism of Shimura data $(G,X)\rightarrow (G,{}^gX)$. Let ${}^gG_{\INT_p}$ be the affine scheme attached to the image of $\xymatrix{O_{G_{\INT_p}}\subseteq O_{G_{\RAT_p}}\ar[r]^(0.6){{}^{g^{-1}}(-)}& O_{G_{\RAT_p}}},$ it is a smooth group scheme with $({}^gG_{\INT_p})(\INT_p)={}^gK_p$. By Proposition \ref{extn morp on int}, the isomorphism of Shimura varieties $\Sh_{K}(G,X)\stackrel{\cong}{\longrightarrow} \Sh_{{}^gK}(G,{}^gX)$ extends to an isomorphism of integral models $\ES_{K}(G,X)\stackrel{\cong}{\longrightarrow} \ES_{{}^gK}(G,{}^gX)$. The same statement holds if we pass to parahoric level. In particular, it suffices to prove the theorem when $Y=Y'\subseteq X'$.
\end{proof}
Now let $(G_4,X_4)$ be of abelian type such that condition (\ref{assump--ab type}) holds. As in (\ref{prob--Ab type}), the construction of $\ES_{K^\circ_4}(G_4,X_4)$ in Theorem \ref{Thm--constuct int ab type} requires to choose a Shimura datum $(G,X)$ as in Lemma \ref{lemma--choose of Hodge cover} first. It is natural to ask whether $\ES_{K^\circ_4}(G_4,X_4)$ depends on the choices of $(G,X)$s. If $(G_4,X_4)$ of \emph{Hodge type} such that condition (\ref{conditon--Hodge type}) holds, the previous theorem already claims that it is independent of choices of the $(G,X)$s. Using similar proof, one can show the followings.
\begin{theorem}\label{Thm--indep ab type 2}The integral model $\ES_{K^\circ_4}(G_4,X_4)$ is independent of choices of the $(G,X)$s as above.
\end{theorem}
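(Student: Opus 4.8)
The plan is to reduce the statement to the independence result for connected Shimura varieties, Theorem \ref{result for conn shv} (1), exactly as in the proof of Theorem \ref{Thm--indep ab type}, the key simplification being that here the two Hodge type data share a common connected datum. Let $(G,X)$ and $(G',X')$ be two choices as in Lemma \ref{lemma--choose of Hodge cover}. By the recipe recalled in \S\ref{setting--ab type}, the derived group $G^{\mathrm{der}}$ is determined by the adjoint datum $(G_4^{\mathrm{ad}},X_4^{\mathrm{ad}})$ together with the types of its $\RAT$-simple factors; since both $(G,X)$ and $(G',X')$ induce isomorphisms of adjoint data with $(G_4^{\mathrm{ad}},X_4^{\mathrm{ad}})$, we obtain a canonical identification $H:=G^{\mathrm{der}}=G'^{\mathrm{der}}$ compatible with the two central isogenies onto $G_4^{\mathrm{der}}$. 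First I would invoke the real approximation, as in the last paragraph of the proof of Theorem \ref{Thm--indep ab type}, to move the chosen connected components so that $X^+$ and $X'^+$ cut out the same component $Y$ of $X^{\mathrm{ad}}=X_4^{\mathrm{ad}}$; both $(G,X)$ and $(G',X')$ then become Hodge type extensions of one and the same connected Shimura datum $(H,Y)$, and the point $\mathfrak{b}$ being taken with prescribed image in $\mathcal{B}(G_4^{\mathrm{ad}},\RAT_p)$ forces the parahoric $H^\circ_{\INTP}=G^{\mathrm{der}\circ}_{\INTP}$, and hence the group $\Gamma_p$, to agree for the two choices.

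Next I would apply Theorem \ref{result for conn shv} (1), which yields a canonical isomorphism between the connected integral models $\ES^+_{\Gamma_p}$ attached to $(G,X)$ and to $(G',X')$. It then remains to propagate this identification through the gluing construction of Theorem \ref{Thm--constuct int ab type}, namely $$\ES_{K_{4,p}^\circ}(G_4,X_4)=\big[[\ES^+_{\Gamma_p}\times \mathscr{A}(G_{4,\INTP})]/\mathscr{A}(G_{\INTP})^\circ\big]^{|\mathrm{J}|}.$$ The guiding principle is that every remaining ingredient of this formula is intrinsic to $H$ and to $G_4$: by \cite[Lemma 4.6.4 (2)]{Paroh} the amalgam $\mathscr{A}(G_{\INTP})^\circ$ depends only on $G^{\mathrm{der}\circ}_{\INTP}=H^\circ_{\INTP}$, while $\mathscr{A}(G_{4,\INTP})$, the index $|\mathrm{J}|$, and the action of $\mathscr{A}(G_{\INTP})^\circ$ on $\mathscr{A}(G_{4,\INTP})$ are manufactured from $G_4$ and the fixed isogeny $H\to G_4^{\mathrm{der}}$. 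The two abelian type models are therefore quotients of identified data by identical group actions, so the isomorphism of Theorem \ref{result for conn shv} (1) should descend to an isomorphism of the resulting schemes, first over $\mathtt{O}$ and then, by the argument closing the proof of Theorem \ref{Thm--indep ab type}, over $O_{E_4}$.

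The step I expect to be the main obstacle is the compatibility of the identification $\ES^+_{\Gamma_p}(G,X)\cong \ES^+_{\Gamma_p}(G',X')$ with these group actions: one must verify that it is $\mathscr{A}(G_{\INTP})^\circ$-equivariant for the deck transformation action on the connected models, and that it is compatible with the two actions on $\mathscr{A}(G_{4,\INTP})$. This is not formal, since the isomorphism of Theorem \ref{result for conn shv} (1) is produced through a common symplectic refinement $(G_3,X_3)$ and the local model diagram (\ref{functo local mod diag}); the task is to check that the deck transformation action, being intrinsic to the pro-system $\{\Sh^+_{K_p^\circ}\}$ and to its integral model, is preserved by that isomorphism. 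Once this equivariance is secured, passing to the quotient and then to the $|\mathrm{J}|$-fold disjoint union is routine, completing the proof.
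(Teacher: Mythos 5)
Your proposal follows essentially the same route as the paper: both reduce to Theorem \ref{result for conn shv} (1) after arranging that $(G,X)$ and $(G',X')$ extend a common connected datum $(H,Y)$, then observe that $\mathscr{A}(G_{\INTP})^\circ=\mathscr{A}(G'_{\INTP})^\circ$ and that $\mathscr{A}(G_{4,\INTP})$ and $\mathrm{J}$ are intrinsic to $G_4$ and the isogeny $H\to G_4^{\mathrm{der}}$, so the two quotient constructions are identified. The equivariance of the isomorphism $\ES^+_{K^\circ_p}\to\ES^+_{K'^\circ_p}$ with the $\mathscr{A}(G_{\INTP})^\circ$-action, which you single out as the main remaining obstacle, is treated as immediate in the paper (it follows because the action is determined by its restriction to the dense generic fiber, where the identification is the tautological one on $\Sh^+_{\Gamma_p}$), so your proposal is correct and matches the paper's argument.
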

\begin{proof}
Let $(G,X)$ and $(G',X')$ be two choices of Hodge type Shimura data as above. By our choices (see Lemma \ref{lemma--choose of Hodge cover}), $G^\mathrm{der}=G'^{\mathrm{der}}$, and $\Sh_{K^\circ_p}$ (resp. $\Sh_{K'^\circ_p}$) is also defined over $E_4$, the $p$-adic reflex field of $(G_4, X_4)$. Without loss of generality, we can assume that they are extensions of the same connected Shimura data $(H,Y)$ with $H=G^\mathrm{der}$ and $Y\subseteq X_4$. By Theorem \ref{result for conn shv} (1), the natural identification $\Sh^+_{K^\circ_p}\rightarrow \Sh^+_{K'^\circ_p,}$ extends to an isomorphism $\ES^+_{K^\circ_p}\rightarrow \ES^+_{K'^\circ_p}$.

Noting that $\mathscr{A}(G_{\INTP})^\circ=\mathscr{A}(G'_{\INTP})^\circ$, the quotients
\begin{align*}
\ES_{K^\circ_{4,p}}&:=\big[[\ES^+_{K^\circ_p}\times \mathscr{A}(G_{4,\INTP})]/\mathscr{A}(G_{\INTP})^\circ\big]^{|\mathrm{J}|}\\
\text{and\ \ \ \ }\ES'_{K^\circ_{4,p}}&:=\big[[\ES^+_{K'^\circ_p}\times \mathscr{A}(G_{4,\INTP})]/\mathscr{A}(G'_{\INTP})^\circ\big]^{|\mathrm{J}|}
\end{align*}
are then integral models induced by $\ES_{K^\circ_p}$ and $\ES_{K'^\circ_p}$ respectively. The identification $[\ES^+_{K^\circ_p}\times\mathscr{A}(G_{4,\INTP})]^{|\mathrm{J}|} \longrightarrow [\ES^+_{K'^\circ_p}\times \mathscr{A}(G_{4,\INTP})]^{|\mathrm{J}|}$ then induces an isomorphism $\ES_{K^\circ_{4,p}}\rightarrow \ES'_{K^\circ_{4,p}}$.
\end{proof}

\subsection{} The formulism of parahoric integral models is functorial in many cases. Let $f:(G_4,X_4)\rightarrow (G'_4,X'_4)$ be a morphism of Shimura data with $G_4$ and $G'_4$ satisfying condition (\ref{assump--ab type}). Considering the natural morphisms of Bruhat-Tits buildings $\mathcal {B}(H,\mathbb{Q}_p)\rightarrow \mathcal {B}(G_4^{\mathrm{der}},\mathbb{Q}_p)\subseteq \mathcal {B}(G_4,\mathbb{Q}_p)\rightarrow \mathcal {B}(G^{\mathrm{ad}},\mathbb{Q}_p)$, we have natural homomorphisms $H_{\INT_p}\rightarrow  G_{4,\INT_p}\rightarrow  G^\mathrm{ad}_{\INT_p}$. Similarly, we have homomorphisms $H'_{\INT_p}\rightarrow  G'_{4,\INT_p}\rightarrow  G'^{\mathrm{ad}}_{\INT_p}$. We will need the following condition on $f$.

\subsubsection{}\label{condi--morp ab type 1} The homomorphism $f:G_4\rightarrow G'_4$ induces a commutative diagram
$$\xymatrix{
H\ar[r]\ar[d]& G_4\ar[r]\ar[d]^f& G^\mathrm{ad}\ar[d]\\
H'\ar[r]& G'_4\ar[r]& G'^{\mathrm{ad}},}$$
and the homomorphisms $H\rightarrow H'$, $G_4\rightarrow G'_4$ and $G^\mathrm{ad}\rightarrow G'^{\mathrm{ad}}$ extend to homomorphisms $H_{\INT_p}\rightarrow H'_{\INT_p}$, $G^\circ_{4,\INT_p}\rightarrow G'^\circ_{4,\INT_p}$ and $G^{\mathrm{ad}\circ}_{\INT_p}\rightarrow G'^{\mathrm{ad}\circ}_{\INT_p}$ respectively.


\begin{proposition}\label{prop--funct ab type}
Notations as above. If $f$ satisfies condition (\ref{condi--morp ab type 1}), then the morphism $\Sh_{K^\circ_4}\rightarrow \Sh_{K'^\circ_4}$ extends to a morphism $\ES_{K^\circ_4}\rightarrow \ES_{K'^\circ_4}$.
\end{proposition}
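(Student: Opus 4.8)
The plan is to reduce the statement to the level of connected integral models, where Theorem \ref{result for conn shv} (2) applies, and then to reassemble the full morphism through the quotient presentation of Theorem \ref{Thm--constuct int ab type}. Throughout I would fix a $p$-adic field $\mathtt{E}$ containing the images of both $E_4$ and $E_4'$, with ring of integers $\mathtt{O}$, base-change every integral model to $\mathtt{O}$, and descend to $O_{E_4'}$ only at the very end, exactly as in the proofs of Theorem \ref{result for conn shv} (2) and Theorem \ref{Thm--indep ab type}.

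First I would unwind condition (\ref{condi--morp ab type 1}). Choosing Hodge-type covers $(G,X)$ of $(G_4,X_4)$ and $(G',X')$ of $(G_4',X_4')$ as in Lemma \ref{lemma--choose of Hodge cover}, with connected data $(H,Y)$ and $(H',Y')$ (and $f(Y)\subseteq Y'$, arranged by real approximation as at the end of the proof of Theorem \ref{Thm--indep ab type}), the commutative diagram of (\ref{condi--morp ab type 1}) places the induced map $f:H\to H'$ on derived groups inside the framework of \S\ref{morp of conn data--setting}, with $f_4=f$ and the two covers playing the role of $\alpha,\alpha'$. The hypothesis there that $H_{\INT_p}\to H'_{\INT_p}$ extends $f_{\RAT_p}$ is precisely assumption (\ref{assump--ext derived}). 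Hence Theorem \ref{result for conn shv} (2) produces a morphism of connected integral models $\ES^+_{\Gamma_p,\mathtt{O}}\to \ES^+_{\Gamma'_p,\mathtt{O}}$ extending $\Sh^+_{\Gamma_p,\mathtt{E}}\to \Sh^+_{\Gamma'_p,\mathtt{E}}$.

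Next I would record the data needed to glue and then assemble. The homomorphisms $G^\circ_{4,\INT_p}\to G_{4,\INT_p}'^\circ$ and $G^{\mathrm{ad}\circ}_{\INT_p}\to G_{\INT_p}'^{\mathrm{ad}\circ}$ of (\ref{condi--morp ab type 1}), together with $f$ on adelic points, induce compatible homomorphisms of the amalgamated products $\mathscr{A}(G_{4,\INTP})\to \mathscr{A}(G_{4,\INTP}')$ and $\mathscr{A}(G_{\INTP})^\circ\to \mathscr{A}(G_{\INTP}')^\circ$, as well as a map of index sets $\mathrm{J}\to \mathrm{J}'$. The essential geometric point is that $\ES^+_{\Gamma_p,\mathtt{O}}\to \ES^+_{\Gamma'_p,\mathtt{O}}$ is equivariant for these actions via $\mathscr{A}(G_{\INTP})^\circ\to \mathscr{A}(G_{\INTP}')^\circ$: equivariance holds on the generic fibers by functoriality of connected Shimura varieties, and since both composites are morphisms between normal integral models that agree on the dense generic fibers, they agree integrally. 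Feeding this equivariant morphism into the presentation
$$\ES_{K^\circ_{4,p}}=\big[[\ES^+_{\Gamma_p}\times \mathscr{A}(G_{4,\INTP})]/\mathscr{A}(G_{\INTP})^\circ\big]^{|\mathrm{J}|}$$
yields the desired morphism $\ES_{K^\circ_4}\to \ES_{K'^\circ_4}$, defined over $\mathtt{E}$ on generic fibers and hence descending to $O_{E_4'}$ by the same argument as in the earlier proofs.

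The main obstacle will be the purely group-theoretic bookkeeping: checking that the maps induced by $f$ genuinely respect the amalgamations defining $\mathscr{A}(G_{4,\INTP})$ and $\mathscr{A}(G_{\INTP})^\circ$, and that the map $\mathrm{J}\to \mathrm{J}'$ renders the $|\mathrm{J}|$-fold and $|\mathrm{J}'|$-fold gluings compatible, so that the equivariant connected morphism actually descends through the double-quotient construction. Once this is carried through using the formalism of \S\ref{setting--ab type}, the entire geometric content reduces to Theorem \ref{result for conn shv} (2) and the density argument for equivariance, with no further normalization or local-model input required.
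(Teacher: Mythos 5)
Your proposal follows essentially the same route as the paper: invoke Theorem \ref{result for conn shv} (2) to extend the morphism on connected integral models, use condition (\ref{condi--morp ab type 1}) to obtain the induced homomorphisms $\mathscr{A}(G_{\INTP})^\circ\rightarrow \mathscr{A}(G'_{\INTP})^\circ$, $\mathscr{A}(G_{4,\INTP})\rightarrow \mathscr{A}(G'_{4,\INTP})$ and the map $\mathrm{J}\rightarrow \mathrm{J}'$, and then pass through the quotient presentation of Theorem \ref{Thm--constuct int ab type}. Your explicit density argument for equivariance of the connected-model morphism is a point the paper leaves implicit, but the overall structure and key inputs are identical.
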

\begin{proof}
Choosing $(G,X)$ and $(G',X')$ as in Lemma \ref{lemma--choose of Hodge cover} for $(G_4,X_4)$ and $(G'_4,X'_4)$ respectively, we get $\ES^+_{K^\circ_p}$ and $\ES^+_{K'^\circ_p}$ if we also choose $X^+\subseteq X$ and $X'^+\subseteq X'$. Without loss of generality, we can assume that $X^+\subseteq X_4$, $X'^+\subseteq X'_4$ and $f(X^+)\subseteq X'^+$. By Theorem \ref{result for conn shv} (2), the morphism $\Sh^+_{K^\circ_p}\rightarrow \Sh^+_{K'^\circ_p}$ extends to a morphism $\ES^+_{K^\circ_p}\rightarrow\ES^+_{K'^\circ_p}$.

By our assumptions (and the discussions after Lemma \ref{lemma--choose of Hodge cover}), we have natural homomorphisms $\mathscr{A}(G_{\INTP})^\circ\rightarrow \mathscr{A}(G'_{\INTP})^\circ$,  $\mathscr{A}(G_{4,\INTP})\rightarrow \mathscr{A}(G'_{4,\INTP})$ and a natural map  $\mathscr{A}(G)^\circ\backslash\mathscr{A}(G_4)/K_{4,p}^\circ\rightarrow \mathscr{A}(G')^\circ\backslash\mathscr{A}(G'_4)/K'^\circ_{4,p}.$ They induce a map $\mathrm{J}\rightarrow \mathrm{J}'$, and hence a morphism $$\big[[\Sh_{K^\circ_p}^+\times\mathscr{A}(G_{4,\INTP})]/\mathscr{A}(G_{\INTP})^\circ\big]^{|\mathrm{J}|} \longrightarrow \big[[\Sh^+_{K'^\circ_p}\times \mathscr{A}(G'_{4,\INTP})]/\mathscr{A}(G'_{\INTP})^\circ\big]^{|\mathrm{J}'|},$$
which, by the proof of \cite[Lemma 4.6.13]{Paroh}, can be identified with the morphism $\Sh_{K^\circ_{4,p}}\rightarrow \Sh_{K'^\circ_{4,p}}$. But then the morphism $$\big[[\ES_{K^\circ_p}^+\times\mathscr{A}(G_{4,\INTP})]/\mathscr{A}(G_{\INTP})^\circ\big]^{|\mathrm{J}|} \longrightarrow \big[[\ES^+_{K'^\circ_p}\times \mathscr{A}(G'_{4,\INTP})]/\mathscr{A}(G'_{\INTP})^\circ\big]^{|\mathrm{J}'|}$$
induced by $\ES_{K^\circ_p}^+\rightarrow\ES^+_{K'^\circ_p}$ gives a morphism $\ES_{K^\circ_{4,p}}\rightarrow \ES_{K'^\circ_{4,p}}$ extending the one on the generic fibers.
\end{proof}
\subsection{}\label{remark--when funct works}  There are situations when condition (\ref{condi--morp ab type 1}) holds. For simplicity, \emph{we assume that} $(G_4^{\mathrm{ad}},X_4^{\mathrm{ad}})$ \emph{has no factors of type} $\mathrm{D}^{\mathbb{H}}$. In particular, $H$ is the universal covering of $G_4^{\mathrm{der}}$.
\subsubsection{}\label{case--change of parah} (Morphisms induced by changes in parahoric) We assume in this case that $(G_4,X_4)=(G'_4,X'_4)$. Let $K^\circ_{4,p}$ and $K'^\circ_{4,p}$ be parahoric subgroups attached to points $\mathfrak{b}$ and $\mathfrak{b}'$ in $\mathcal {B}(G_4,\mathbb{Q}_p)$ respectively. If the open facet containing $\mathfrak{b}'$ is in boundary of that containing $\mathfrak{b}$, then we have natural homomorphisms $G^\circ_{4,\INT_p}\rightarrow G'^\circ_{4,\INT_p}$ and $G^{\mathrm{ad}\circ}_{\INT_p}\rightarrow G'^{\mathrm{ad}\circ}_{\INT_p}$.

Replacing $\mathfrak{b}$ (resp. $\mathfrak{b}'$) by another point in its open facet if necessary, we have then $H^\circ_{\INT_p}= H_{\INT_p}$ (resp. $H'^\circ_{\INT_p}= H'_{\INT_p}$), as $H$ is simply connected. In particular, we have a natural homomorphism $H_{\INT_p}\rightarrow  H'_{\INT_p}$. To sum up, condition (\ref{condi--morp ab type 1}) holds.

\subsubsection{}\label{case--induced by surj} (Morphisms induced by surjections) We assume in this case that $f:G_4\rightarrow G'_4$ is surjective. It induces commutative diagrams
\begin{center}
$\xymatrix{
H\ar[r]\ar[d]& G_4\ar@{->>}[r]\ar@{->>}[d]& G^\mathrm{ad}_4\ar@{->>}[d]\\
H'\ar[r]& G'_4\ar@{->>}[r]& G'^{\mathrm{ad}}_4}$ \ \ \ \ and \ \ \ \ $\xymatrix{
\mathcal {B}(G_4,\mathbb{Q}_p)\ar[r]\ar[d]& \mathcal {B}(G^\mathrm{ad}_4,\mathbb{Q}_p)\ar[d]\\
\mathcal {B}(G'_4,\mathbb{Q}_p)\ar[r]& \mathcal {B}(G'^{\mathrm{ad}}_4,\mathbb{Q}_p),}$
\end{center}
where the second diagram is induced by the first one by functoriality (see e.g. \cite[Theorem 2.1.8]{funct BT}).


Let $K^\circ_{4,p}$ be attached to $\mathfrak{b}\in \mathcal {B}(G_4,\mathbb{Q}_p)$. If $K'^\circ_{4,p}$ is attached to the image of $\mathfrak{b}$ in $\mathcal {B}(G'_4,\mathbb{Q}_p)$, the homomorphisms $H\rightarrow H'$, $G_4\rightarrow G'_4$ and $G^\mathrm{ad}\rightarrow G'^{\mathrm{ad}}$ extend to homomorphisms $H_{\INT_p}\rightarrow H'_{\INT_p}$, $G_{4,\INT_p}\rightarrow G'_{4,\INT_p}$ and $G^{\mathrm{ad}}_{\INT_p}\rightarrow G'^{\mathrm{ad}}_{\INT_p}$ respectively. In particular, condition (\ref{condi--morp ab type 1}) holds.

\

\addcontentsline{toc}{section}{References}

\end{document}